\newtheorem{thm}{Theorem}[section]
\newtheorem{lem}[thm]{Lemma}
\newtheorem{cor}[thm]{Corollary}
\newtheorem{prop-def}[thm]{Proposition-Definition}
\newtheorem{conj}[thm]{Conjecture}
\theoremstyle{definition}
\newtheorem{defn}[thm]{Definition}
\newtheorem{remark}[thm]{Remark}
\newcommand{\nc}{\newcommand}
\nc{\delete}[1]{{}}
\nc{\mlabel}[1]{\label{#1}}  
\nc{\mcite}[1]{\cite{#1}}  
\nc{\mref}[1]{\ref{#1}}  
\nc{\meqref}[1]{\eqref{#1}}  
\nc{\mbibitem}[1]{\bibitem{#1}} 
	\nc{\mlabel}[1]{\label{#1} {{\emph{{\ }\ (#1)}}}}				 
	\nc{\mcite}[1]{\cite{#1}{{\emph{{\ }(#1)}}}}  
	\nc{\mref}[1]{\ref{#1}{{\emph{{\ }(#1)}}}}  
	\nc{\meqref}[1]{\eqref{#1}{{\emph{{\ }(#1)}}}}  
	\nc{\mbibitem}[1]{\bibitem[\bf #1]{#1}} 
\nc{\mrm}[1]{{\rm #1}}
\nc{\name}[1]{{\bf #1}}
\nc{\tforall}{\ \text{for all }}
\nc{\gldim}{\mathrm{gldim}}
\nc{\la}{\longrightarrow}
\nc{\ot}{\otimes}
\nc{\rar}{\rightarrow}
\newcommand{\Mod}{\mathrm{Mod~}}
\newcommand{\proj}{\mathrm{proj~}}
\nc{\Alg}{{\mathrm{Alg}}}
\nc{\bfk}{{\bf k}}
\nc{\C}{{\mathrm{C}}}
\nc{\DA}{{\mathsf{DA}_\lambda}}
\nc{\Dif}{{{}_\lambda\!\mathfrak{Dif}}}
\nc{\Difinfty}{{{}_\lambda\!\mathfrak{Dif}_\infty}}
\nc{\DO}{{\mathsf{DO}_\lambda}}
\nc{\End}{\mrm{End}}
\nc{\Ext}{\mrm{Ext}}
\nc{\Fil}{\mrm{Fil}}
\nc{\Fr}{\mrm{Fr}}
\nc{\Frob}{\mrm{Frob}}
\nc{\Gal}{\mrm{Gal}}
\nc{\GL}{\mrm{GL}}
\nc{\Hom}{\mrm{Hom}}
\nc{\Tor}{\mrm{Tor}}
\nc{\Hoch}{\mrm{Hoch}}
\nc{\HC}{\mrm{HC}}
\nc{\hsr}{\mrm{H}}
\nc{\hpol}{\mrm{HP}}
\nc{\im}{\mrm{Im}}
\nc{\Irr}{\mrm{Irr}}
\nc{\incl}{\mrm{incl}}
\nc{\length}{\mrm{length}}
\nc{\NLSW}{\mrm{NLSW}}
\nc{\Lie}{\mrm{Lie}}
\nc{\mchar}{\rm char}
\nc{\mpart}{\mrm{part}}
\nc{\ql}{{\QQ_\ell}}
\nc{\qp}{{\QQ_p}}
\nc{\rank}{\mrm{rank}}
\nc{\rcot}{\mrm{cot}}
\nc{\rdef}{\mrm{def}}
\nc{\rdiv}{{\rm div}}
\nc{\rmH}{ {\mathrm{H}}}
\nc{\rtf}{{\rm tf}}
\nc{\rtor}{{\rm tor}}
\nc{\res}{\mrm{res}}
\nc{\Sh}{{\mathrm{Sh}}}
\nc{\SL}{\mrm{SL}}
\nc{\Spec}{\mrm{Spec}}
\nc{\sgn}{{\mathrm{sgn}}}
\nc{\tor}{\mrm{tor}}
\nc{\Tr}{\mrm{Tr}}
\nc{\tr}{\mrm{tr}}
\nc{\wt}{\mrm{wt}}
\nc{\op}{\mrm{op}}
\nc{\cpx}[1]{#1^{\bullet}}
\nc{\HH}{ \mathrm{HH}} \nc{\TP}{\widetilde{P}}
\nc{\bbA}{{\mathbb A}}   \nc{\bbB}{{\mathbb B}}
\nc{\bbC}{{\mathbb C}}
\nc{\bbD}{{\mathbb D}}   \nc{\bbE}{{\mathbb E}}
\nc{\bbF}{{\mathbb F}}   \nc{\bbG}{{\mathbb G}}
  \nc{\bbL}{{\mathbb L}}
\nc{\bbN}{{\mathbb N}}   \nc{\bbP}{{\mathbb P}}
\nc{\bbQ}{{\mathbb Q}}   \nc{\bbR}{{\mathbb R}}
\nc{\bbT}{{\mathbb T}}   \nc{\bbV}{{\mathbb V}}
\nc{\bbZ}{{\mathbb Z}}
\nc{\bbm}{{\mathbb m}}   \nc{\bbS}{{\mathbb S}}
\nc{\calA}{{\mathcal A}}    \nc{\calc}{{\mathcal C}}
\nc{\calD}{\mathcal{D}}     \nc{\cale}{{\mathcal E}}
\nc{\calf}{{\mathcal F}}    \nc{\calg}{{\mathcal G}}
\nc{\calH}{{\mathcal H}}    \nc{\cali}{{\mathcal I}}
\nc{\call}{{\mathcal L}}    \nc{\calm}{{\mathcal M}}
\nc{\caln}{{\mathcal N}}    \nc{\calo}{{\mathcal O}}
\nc{\calP}{{\mathcal P}}    \nc{\calr}{{\mathcal R}}
\nc{\cals}{{\mathcal S}}    \nc{\calT}{{\mathcal T}}
\nc{\calv}{{\mathcal V}}    \nc{\calw}{{\mathcal W}}
\nc{\calx}{{\mathcal X}}
\newcommand{\scrD}{\mathscr{D}}
\newcommand{\scrK}{\mathscr{K}}
\nc{\fraka}{{\mathfrak a}}
\nc{\frakb}{\mathfrak{b}}
\nc{\frakg}{{\frak g}}
\nc{\frakl}{{\frak l}}
\nc{\fraks}{{\frak s}}
\nc{\frakB}{{\frak B}}
\nc{\frakm}{{\frak m}}
\nc{\frakM}{{\frak M}}
\nc{\frakp}{{\frak p}}
\nc{\frakW}{{\frak W}}
\nc{\frakX}{{\frak X}}
\nc{\frakS}{{\frak S}}
\nc{\frakA}{{\frak A}}
\nc{\frakC}{{\frak{C}}}
\nc{\frakx}{{\frakx}}
\nc{\frakt}{{\mathfrak{T}}}
\nc{\Tria}{\mathrm{Tria}}
\nc{\vspa}{\vspace{-.1cm}}
\nc{\vspb}{\vspace{-.2cm}}
\nc{\vspc}{\vspace{-.3cm}}
\nc{\vspd}{\vspace{-.4cm}}
\nc{\vspe}{\vspace{-.5cm}}
\nc{\lir}[1]{\textcolor{red}{\underline{Li:}#1 }}
\begin{document}

\title[Han's conjecture]{A recollement approach to Han's conjecture}

\author{Ren Wang, Xiaoxiao Xu, Jinbi Zhang,  and Guodong Zhou}

\address{Ren Wang, School of Mathematics, Hefei University of Technology, Hefei 230000, China}
\email{renw@mail.ustc.edu.cn}

\address{Xiaoxiao Xu and Guodong Zhou, School of Mathematical Sciences, Ministry of Education Key Laboratory of Mathematics and Engineering Applications, Shanghai Key Laboratory of PMMP,  East China Normal University, Shanghai 200241, China}
\email{52275500015@stu.ecnu.edu.cn, gdzhou@math.ecnu.edu.cn}

\address{Jinbi Zhang, School of Mathematical Sciences, Anhui University, Hefei 230601, China}
\email{zhangjb@ahu.edu.cn}

\date{\today}

\begin{abstract}
A conjecture due to  Y. Han  asks whether that Hochschild homology groups of a finite dimensional algebra vanish for sufficiently large degrees would imply that the algebra is of finite global dimension. We investigate this conjecture from the  viewpoint of recollements  of derived categories. It is shown  that for a recollement of unbounded derived categories of  rings which extends downwards (or upwards) one step, Han's conjecture holds for the ring in the middle if and only if  it holds for the two rings on the two sides and hence
 Han's conjecture is reduced  to derived $2$-simple rings.
Furthermore, this reduction result is applied to  Han's conjecture for  Morita contexts rings and exact contexts. Finally it is proved  that Han's conjecture holds for skew-gentle algebras, category algebras of finite EI categories and Geiss-Leclerc-Schr\"{o}er algebras associated to Cartan triples.

\end{abstract}

\subjclass[2020]{
16E40 
18G80 
19D55 
16E10 
16E35 
16E45 
16E65 
 }
\keywords{Exact context, Finite EI category, Global dimension, GLS algebra, Han's conjecture, Hochschild homology,   Morita context  ring,  Recollement,   Skew-gentle algebra}

\maketitle

\vspace{-.7cm}

 \tableofcontents

\vspace{-.7cm}

\allowdisplaybreaks

\section{Introduction}
Let $A$ be a finite dimensional algebra over a field. It  is well known that if the global dimension of $A$ is finite, then its Hochschild cohomology groups $\HH^n(A)$ vanish for all sufficiently large $n$. `` Happel's question " (see \cite{Hap89}) is concerned with the converse of this statement, that is, if the Hochschild cohomology groups  $\HH^n(A)$ vanish  for all sufficiently large $n$,  then is its global dimension finite? This does not hold in general by an example in \cite{BGMS05} due to R.-O Buchweitz, E. L. Green, D. Madsen, and {\O}. Solberg. 
In \cite{Han06}, Y. Han showed that all the higher Hochschild homology of this counter-example do not vanish. This led him to suggest the following conjecture:
\begin{conj}[Han's conjecture]
  Let $A$ be a finite dimensional algebra over an algebraically closed field. If the Hochschild homology groups $\HH_n(A)$ vanish for all sufficiently large $n$, then its global dimension is finite.\end{conj}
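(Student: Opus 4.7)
The plan is to attack Han's conjecture by a reduction strategy through recollements of derived module categories. Rather than aiming at the conjecture directly for arbitrary finite dimensional algebras, the goal is to show that the class of rings satisfying Han's conjecture is stable under the gluing operation encoded by a recollement, thereby reducing the problem to the derived simple (or derived $2$-simple) rings, where either direct arguments become available or the conjecture can be attacked case by case.

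The core technical step is a transfer theorem of the following shape: given a recollement $\calD(R) \hookrightarrow \calD(S) \twoheadrightarrow \calD(T)$ of unbounded derived categories of rings which extends one step downwards (respectively upwards), Han's conjecture holds for $S$ if and only if it holds for both $R$ and $T$. To prove this, I would first control global dimension: a one-step extension of the recollement forces inequalities of the form $\gldim S \leq \gldim R + \gldim T + c$ for a small constant $c$, so finiteness of global dimension transfers in both directions. The harder part is to control $\HH_*$; the plan is to produce a Mayer--Vietoris type long exact sequence, or a spectral sequence, relating $\HH_*(S)$, $\HH_*(R)$, $\HH_*(T)$, and an error term built out of the gluing functors. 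The one-step extension hypothesis should be precisely what forces this error term to vanish in sufficiently large degree, so that eventual vanishing of Hochschild homology descends from $S$ to the two sides, and conversely ascends from the sides to $S$.

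Once the transfer theorem is established, I would apply it to the concrete constructions mentioned in the abstract. For Morita context rings and exact contexts, the natural recollement whose side terms are the corner rings allows Han's conjecture for the whole to be deduced from its validity on the corners. For skew-gentle algebras, a recollement arising from a suitably chosen idempotent reduces to a smaller skew-gentle algebra and a piece of finite global dimension, enabling induction. Category algebras $\bfk \mathcal{C}$ of finite EI categories decompose along the filtration of $\mathcal{C}$ by its object set, producing recollements whose sides are group algebras (for which the conjecture is either trivial or classical) and strictly smaller category algebras, so Han's conjecture follows by induction on $|\mathrm{Ob}(\mathcal{C})|$. For Geiss--Leclerc--Schröer algebras associated to Cartan triples, the inductive structure coming from the Cartan data should yield an analogous reduction to hereditary or smaller GLS algebras.

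The main obstacle is the derived $2$-simple case: the reduction terminates at rings admitting no nontrivial recollement with smaller rings, and the conjecture remains open there, so the present strategy gives at best a conditional statement in full generality. A second, more concrete, difficulty is verifying the one-step extension hypothesis for the recollements arising in the applications; extension of a recollement is a delicate compactness condition that can fail in natural examples, and establishing it typically requires explicit information about the adjoint functors and perfectness properties of the gluing objects. A subtler point is the behavior of Hochschild homology under change of rings in the unbounded setting, where standard derived invariance tools require hypotheses that are not automatic for arbitrary rings, and some care is needed to ensure that the long exact sequences produced really compute $\HH_*$ and not a variant.
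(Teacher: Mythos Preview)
The statement you are attempting to prove is Han's \emph{conjecture}; the paper does not prove it and does not claim to. What the paper actually establishes is the reduction theorem (Theorem~\ref{thm:reduction}) and its consequences for specific classes of algebras, and your proposal is essentially a sketch of that same program rather than a proof of the conjecture itself. You acknowledge this yourself when you write that the strategy ``gives at best a conditional statement in full generality'' because of the derived $2$-simple obstruction --- so you already see that this is not a proof of the conjecture.

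On the technical side, your plan for the transfer theorem differs in one respect from the paper's argument. You propose to control Hochschild homology via a Mayer--Vietoris long exact sequence or spectral sequence and then argue that an ``error term'' vanishes under the one-step extension hypothesis. The paper bypasses this analysis entirely: Keller's result (Theorem~\ref{thm:split}) shows that a height-$2$ ladder forces a genuine \emph{splitting} $\HH_n(R)\cong\HH_n(S)\oplus\HH_n(T)$ for all $n$, so no error term ever appears and the vanishing transfer is immediate. Likewise, the global dimension step is not an inequality you need to derive but a direct citation of \cite[Theorem~I]{AKLY17a}. Your outline of the applications (Morita contexts, skew-gentle, finite EI categories, GLS algebras) matches the paper's, though in each case the paper's argument is shorter than you suggest: the finite EI and GLS cases reduce via iterated triangular matrix decompositions (Corollary~\ref{han-tri}) to group algebras and truncated polynomial rings, and the skew-gentle case uses an existing recollement from \cite{Che22} whose right-hand term already has global dimension $\leq 1$.
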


Y. Han showed in the same paper \cite{Han06} that the conjecture is true for monomial algebras. In \cite{ALVP92}, L. L. Avramov and M. Vigu\'e-Poirrier proved that finitely generated commutative algebras satisfy Han's conjecture. Since then, it has been  shown that Han's conjecture holds for many classes of algebras; see  \cite{BE08,  BM09, BM10, SVP10,   BHM12, SSAV13, IS14,  BM17, IM22}.

 Recently, C. Cibils, M. Lanzilotta, E. N. Marcos, M. Redondo,  and A. Solotar  initiated a project  to study Han's conjecture,   firstly via  the operations of deleting or adding arrows in bounded quivers (see \cite{CLMS20a}),   then  consider
the behavior of Han's conjecture under  split, left or right,  bounded  extensions of algebras (see \cite{CLMS20b, CRS21,  CLMS22}), and finally considered  Morita context algebras (see \cite{CLMS24}). Their main tool to control Hochschild homology is  the Jacobi-Zariski long nearly exact sequence (see \cite{CLMS21}).

 We refer the reader to \cite{Cru23} and the references therein for some new advances on the conjecture.

 Note that Han's conjecture could be stated for an arbitrary ring although this conjecture is not true in this generality; for counter-examples see \cite{Cru23}.



In this paper we     investigate Han's conjecture from the viewpoint of recollement of derived categories. 

Following the idea to reduce homological conjectures via recollements in the sense of \cite{BBD81} or ladders in the sense of \cite{BGS88} of derived categories (see \cite{QH16, CX17}),  our first result is a reduction theorem for Han's conjecture.
\begin{thm}\label{thm:reduction}
Let $R$, $S$ and $T$ be rings which  admit  a ladder of height $2$:
$$\xymatrixcolsep{4pc}
\xymatrix{
\scrD(S) \ar@<0.8ex>[r]\ar@<-2.4ex>[r]
&\scrD(R) \ar@<-2.4ex>[r]  \ar@<-2.4ex>[l]
\ar@<0.8ex>[l] \ar@<0.8ex>[r]
&\scrD(T), \ar@<-2.4ex>[l] \ar@<0.8ex>[l]
}$$
where $\scrD(R)$ is the unbounded derived category of all right $R$-modules.
Then Han's conjecture holds for $R$ if and only if it holds for $S\times T$. So Han's conjecture is reduced to derived $2$-simple rings.
\end{thm}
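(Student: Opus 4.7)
The plan is to derive from the ladder of height $2$ two independent biconditionals---one for vanishing of Hochschild homology in high degrees, one for finiteness of global dimension---and then combine them.

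For the Hochschild side, I would show that the ladder induces a long exact sequence
\[
\cdots \to \HH_n(S) \to \HH_n(R) \to \HH_n(T) \to \HH_{n-1}(S) \to \cdots.
\]
A recollement by itself yields only a localization sequence of triangulated categories, but the second rung of the ladder supplies the extra adjoint needed to guarantee that the recollement functors preserve compact objects on both sides. Consequently the restriction to perfect complexes, $\mathrm{per}(S)\to\mathrm{per}(R)\to\mathrm{per}(T)$, becomes an exact sequence of small dg categories in Keller's sense, and his long exact sequence for Hochschild homology of such a sequence produces the displayed one. Combined with the splitting $\HH_n(S\times T)=\HH_n(S)\oplus\HH_n(T)$, this shows that $\HH_n(R)=0$ for $n\gg 0$ if and only if $\HH_n(S\times T)=0$ for $n\gg 0$.

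For the global dimension side, I would use the same ladder to establish $\gldim R<\infty$ if and only if $\gldim(S\times T)<\infty$. The extra adjoint provides enough symmetry between the two flanks of the recollement that finite projective dimension transports through the functors in both directions; I would invoke the existing literature on recollements, ladders and global dimension rather than reproving these implications, verifying only that the ``extends one step'' hypothesis subsumes what is needed.

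Combining the two biconditionals yields the main statement: if Han's conjecture holds for $S\times T$ and $\HH_n(R)=0$ for $n\gg 0$, then $\HH_n(S\times T)=0$ for $n\gg 0$, whence $\gldim(S\times T)<\infty$, whence $\gldim R<\infty$; the reverse direction is symmetric. For the reduction to derived $2$-simple rings I would iterate the biconditional: whenever $R$ admits a nontrivial ladder of height $2$ we replace it by $S\times T$, and for finite dimensional algebras the number of isomorphism classes of simple modules drops strictly at each step, so the process terminates at derived $2$-simple factors. The principal obstacle I anticipate is verifying, in the generality of \emph{unbounded} derived categories of arbitrary rings rather than of finite dimensional algebras, that ``ladder of height $2$'' is exactly the right hypothesis both for the Keller long exact sequence and for the two-sided global dimension estimate.
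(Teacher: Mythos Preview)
Your overall architecture---establish a biconditional for high-degree vanishing of Hochschild homology and a biconditional for finiteness of global dimension, then combine---matches the paper exactly, and your treatment of the global-dimension side (quote \cite{AKLY17a}) is precisely what the paper does.

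The gap is on the Hochschild side. From the long exact sequence
\[
\cdots \to \HH_n(S) \to \HH_n(R) \to \HH_n(T) \to \HH_{n-1}(S) \to \cdots
\]
you can only deduce one direction: if $\HH_n(S)$ and $\HH_n(T)$ vanish for $n\gg 0$, then so does $\HH_n(R)$. The converse fails: if $\HH_n(R)=0$ for $n\geq N$, the sequence collapses to isomorphisms $\HH_n(T)\cong \HH_{n-1}(S)$ for $n>N$, and nothing forces these common values to be zero. So the biconditional you assert does not follow from the long exact sequence together with the trivial splitting $\HH_n(S\times T)\cong\HH_n(S)\oplus\HH_n(T)$.

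What the ladder of height $2$ actually buys you is stronger than a long exact sequence: it gives Keller's \emph{splitting} theorem (Theorem~\ref{thm:split} in the paper), namely $\HH_n(R)\cong\HH_n(S)\oplus\HH_n(T)$ for all $n$. This is exactly the ingredient the paper invokes, and it makes the biconditional immediate. Note also that a single recollement already produces Keller's long exact sequence (Theorem~\ref{hh-der}); the second rung is not needed for that, but is needed for the splitting. Your paragraph misallocates the role of the extra adjoint: it is not there to make the localisation sequence exist, but to split it.
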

Here a  ring $R$ is called \emph{derived 2-simple} if it does not admit any ladder of height 2 by  derived categories of rings.

The easy proof of the above reduction theorem has two ingredients. The first ingredient is \cite[Theorem I]{AKLY17a} which relates global dimensions of rings in a recollement. The second ingredient is a splitting theorem for Hochschild homology due to B. Keller.
\begin{thm}[Keller \cite{Kel98}]\label{thm:split}
Let $R$, $S$ and $T$ be rings with a ladder of height $2$:
$$\xymatrixcolsep{4pc}
\xymatrix{
\scrD(S) \ar@<0.8ex>[r]\ar@<-2.4ex>[r]
&\scrD(R) \ar@<-2.4ex>[r]  \ar@<-2.4ex>[l]
\ar@<0.8ex>[l] \ar@<0.8ex>[r]
&\scrD(T). \ar@<-2.4ex>[l] \ar@<0.8ex>[l]
}$$
Then
$$\HH_n(R)\simeq \HH_n(S) \oplus  \HH_n(T)
\  \mbox{for all }\  n\in \mathbb{N}.$$
\end{thm}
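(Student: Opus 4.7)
The strategy I would follow is to lift the ladder of height $2$ to the DG level, apply the localization property of Hochschild homology to the resulting short exact sequence of DG categories, and then use the extra adjoint provided by the ladder to split the resulting triangle into a direct sum.

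First, I would lift the six-functor recollement to DG enhancements. This is the key technical input: a \emph{ladder} (rather than a plain recollement) of height $2$ guarantees that the smashing subcategories attached to the recollement are generated by compact objects on both sides, so the induced quotients admit presentations by DG algebras. One thus obtains a Drinfeld--Keller short exact sequence of small DG categories whose perfect derived categories are equivalent to $\scrD^{\mathrm{perf}}(S)$, $\scrD^{\mathrm{perf}}(R)$ and $\scrD^{\mathrm{perf}}(T)$ (up to direct summands, which does not affect Hochschild homology).

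Second, I would apply Keller's additivity theorem from \cite{Kel98} which asserts that Hochschild homology is a localizing invariant on short exact sequences of small DG categories. This produces a distinguished triangle
$$\HH(S) \longrightarrow \HH(R) \longrightarrow \HH(T) \longrightarrow \Sigma\HH(S)$$
in the derived category of $\bfk$-modules, and hence a long exact sequence on Hochschild homology groups.

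Third, I would use the extra adjoint in the ladder of height $2$ to split this triangle. The extra adjoint lifts to a DG-functorial section at the level of DG enhancements (on one side or the other, depending on whether the ladder extends upwards or downwards), and by functoriality of Hochschild homology it induces a section of the morphism $\HH(R)\to \HH(T)$ (or a retraction of $\HH(S)\to \HH(R)$). Consequently the connecting map $\HH(T) \to \Sigma\HH(S)$ is null-homotopic, and the triangle splits, yielding
$$\HH_n(R) \cong \HH_n(S) \oplus \HH_n(T) \quad \text{for all } n \in \mathbb{N}.$$

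The main obstacle is the first step: without the height-$2$ hypothesis a recollement of unbounded derived categories need not lift to a short exact sequence of DG enhancements, so one must carefully exploit the compact generation provided by the extra adjoint. This DG lifting issue is precisely what Keller addresses in \cite{Kel98}, so once it is set up the remaining steps become formal.
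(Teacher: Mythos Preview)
Your outline is essentially correct and matches Keller's argument in \cite{Kel98}, which the paper cites without reproducing a proof (the statement is restated in the dg setting as Theorem~\ref{thm:splitting-dg}, attributed to \cite[2.9 Proposition (b)]{Kel98}). One small correction to your diagnosis of where the height-$2$ hypothesis enters: it is not needed for your first step---a single recollement of $\scrD(R)$ by $\scrD(S)$ and $\scrD(T)$ already restricts to a short exact sequence (up to direct summands) of perfect complexes by Theorem~\ref{thm:recollement-vs-ses}, since these derived categories are compactly generated, and this is what feeds into Keller's localisation theorem (Theorem~\ref{hh-der}) to produce the triangle in Hochschild homology. The height-$2$ hypothesis is used precisely where you invoke it in your third step: the extra adjoint (equivalently, by Lemma~\ref{lem:extending-recollement-downward}, the fact that $i_*$ or $j^*$ preserves compacts) supplies the section or retraction at the level of perfect complexes, whose image under $\HH$ splits the triangle.
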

We state Theorem \ref{thm:split} in the more general setting of homology theories of dg rings; see Theorem \ref{thm:splitting-dg}.

\medskip

Another reduction method for Han's conjecture via algebra extensions are presented in recent papers \cite{CLMS20a, CLMS20b, CRS21, CLMS22, CLMS24, IM22,  QXZZ24}.

In Section \ref{subsec-han-conj-Morita}, we apply Theorem \ref{thm:reduction} to   Morita context rings.

\begin{thm}[Theorem \ref{Hanconj-Morita context}]
\label{Intro-Hanconj-Morita context}
Let $R:=\left(\begin{smallmatrix}                                            S & _SN_T \\                                              _TM_S & T                                            \end{smallmatrix}\right)_{(\alpha, \beta)}$ be a Morita context ring.
\begin{itemize}
\item[$(1)$] If $\beta:M\otimes_SN\to T$ is a monomorphism, ${\rm Tor}^S_i(M,N)=0$ for all $i\ge 1$, and $M_S\in \scrD^c(S)$, then Han's conjecture holds for $R$ if and only if it holds for $S\times T/{\rm Im}(\beta)$.

\item[$(2)$] If $\alpha:N\otimes_TM\to S$ is a monomorphism, ${\rm Tor}^T_i(N,M)=0$ for all $i\ge 1$, and $N_T\in \scrD^c(T)$, then Han's conjecture holds for $R$ if and only if it holds for $S/{\rm Im}(\alpha)\times T$.

    \end{itemize}
\end{thm}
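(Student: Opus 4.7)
The plan is to apply the reduction theorem, Theorem~\ref{thm:reduction}, by constructing in each part a ladder of height $2$ of unbounded derived categories whose middle term is $\scrD(R)$ and whose two outer terms are $\scrD(S)$ and $\scrD(T/\mathrm{Im}(\beta))$ in part~(1), respectively $\scrD(S/\mathrm{Im}(\alpha))$ and $\scrD(T)$ in part~(2). Parts~(1) and~(2) are exchanged by swapping the roles of $(S,N,\alpha)$ and $(T,M,\beta)$ together with the two diagonal idempotents of $R$, so it is enough to treat part~(1).

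The starting point is the idempotent $e = \left(\begin{smallmatrix} 1 & 0 \\ 0 & 0 \end{smallmatrix}\right) \in R$. A direct matrix computation yields $eRe \cong S$, $Re \cong S \oplus M$ as a right $S$-module, $eR \cong S \oplus N$ as a left $S$-module, and $ReR = \left(\begin{smallmatrix} S & N \\ M & \mathrm{Im}(\beta) \end{smallmatrix}\right)$, so that $R/ReR \cong T/\mathrm{Im}(\beta)$ as rings. The next step is to prove that $ReR$ is a stratifying ideal, meaning that the canonical multiplication map $Re \otimes_S^L eR \to ReR$ is a quasi-isomorphism of $R$-bimodule complexes. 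Expanding the ordinary tensor product gives $Re \otimes_S eR \cong \left(\begin{smallmatrix} S & N \\ M & M \otimes_S N \end{smallmatrix}\right)$, and the comparison map is the identity on three of the four entries and is $\beta\colon M \otimes_S N \to \mathrm{Im}(\beta)$ on the fourth. The Tor-vanishing hypothesis $\mathrm{Tor}_i^S(M,N) = 0$ for all $i \geq 1$ ensures that $Re \otimes_S^L eR$ is concentrated in degree zero, and the injectivity of $\beta$ ensures that the degree-zero map is an isomorphism. The stratifying property then yields the standard idempotent recollement
$$
\scrD(T/\mathrm{Im}(\beta)) \longrightarrow \scrD(R) \longrightarrow \scrD(S).
$$

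The final step is to upgrade this recollement to a ladder of height $2$. Here the decisive input is the compactness hypothesis $M_S \in \scrD^c(S)$, which translates to $Re \cong S \oplus M \in \scrD^c(S)$. By the standard criterion for the existence of an additional adjoint in a recollement induced by a stratifying idempotent ideal, compactness of $Re$ as a right $eRe$-module is exactly what is needed to extend the recollement by one further step, producing a ladder of height $2$ with the prescribed outer terms. Theorem~\ref{thm:reduction} then applies verbatim and gives the equivalence of Han's conjecture for $R$ with Han's conjecture for $S \times T/\mathrm{Im}(\beta)$. I expect the main technical subtlety to lie in this last step: one must verify that compactness of $M_S$ alone, without any finiteness assumption on $Re$ as an $R$-module, already forces the relevant derived functor to commute with arbitrary coproducts, so that Brown representability supplies the missing adjoint.
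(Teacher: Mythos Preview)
Your proposal is correct and follows essentially the same route as the paper's proof: verify that the idempotent $e$ is stratifying using the Tor-vanishing and the injectivity of $\beta$, obtain the standard recollement, and then extend it one step downwards via the compactness of $Re_S\cong S\oplus M$. Your closing worry is unfounded: the functor $j^{*}=?\otimes_R^{\mathbb L}Re$ is a left adjoint, hence already commutes with coproducts; what matters is that it preserves compact objects, and since $j^{*}(R)\cong Re_S$ is compact in $\scrD(S)$ this is immediate, after which Lemma~\ref{lem:extending-recollement-downward} (equivalently Brown representability applied to $j_{*}$) supplies the extra adjoint.
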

Examples of Morita context rings satisfying the conditions of the result above are constructed.

\medskip

In section \ref{subsec-han-conj-exactcontext}, we apply Theorem \ref{thm:reduction} to   exact contexts.

Let $\lambda:R\to S$ and $\mu:R\to T$ be two ring homomorphisms. Assume that $M$ is an $S$-$T$-bimodule together with a fixed element $m\in M$. The quadruple $(\lambda, \mu, M, m)$ is said to be an \emph{exact context} if the following sequence
$$0\longrightarrow R \stackrel{\left({\lambda}\atop{\mu}\right)}{\longrightarrow}
S\oplus T
\stackrel{(\cdot m,-m \cdot)}{\longrightarrow}
M\longrightarrow 0
$$
is an exact sequence of abelian groups, where $\cdot m$ and $m \cdot$ denote the right and left multiplication by $m$, respectively.
From an exact context $(\lambda,\mu, M, m)$, H. X. Chen and C. C. Xi  associated it with a new ring $T\boxtimes_RS$, called the \emph{noncommutative tensor product} of $(\lambda,\mu, M, m)$ (see \cite{CX19, CX21}), which  generalizes the usual tensor products over commutative rings, and captures coproducts of rings and dual extensions.
Recall that an exact context $(\lambda,\mu, M, m)$ is said to be \emph{homological} if ${\rm Tor}^R_i(T,S)=0$ for all $i\geq 1$.

\begin{thm}[Theorem \ref{Hanconj-exact context}]
\label{Intro-Hanconj-exact context}
Let $(\lambda, \mu, M, m)$ be a homological exact context. Assume $T_R\in\scrD^c(R)$.
Then Han's conjecture holds for $(T\boxtimes_RS)\times R$ if and only if it holds for $S\times T$.
\end{thm}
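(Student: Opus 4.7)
The plan is to exhibit a single auxiliary ring $\Lambda$ sitting simultaneously in two ladders of height $2$ of derived module categories: one whose outer rings are $T\boxtimes_R S$ and $R$, the other whose outer rings are $S$ and $T$. Two applications of Theorem \ref{thm:reduction} will then bridge the pairs $((T\boxtimes_R S)\times R)$ and $(S\times T)$ through the common middle term $\Lambda$, giving the equivalence.

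\textbf{Choice of $\Lambda$.} Following the framework of Chen--Xi for exact contexts \cite{CX19, CX21}, I would take $\Lambda$ to be the ring of the exact context: a (generalized) triangular-matrix-type ring built from $S$, $T$ and the $S$-$T$-bimodule $M$, with multiplication encoding the distinguished element $m\in M$. The two key features of $\Lambda$ are (a) its recollement relationship with $R$ and $T\boxtimes_R S$ coming from the exact-context structure, and (b) its triangular corner decomposition with corners $S$ and $T$.

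\textbf{First ladder.} The homological hypothesis $\mathrm{Tor}^R_i(T,S)=0$ for all $i\geq 1$ supplies the vanishing required in the Chen--Xi machinery to produce a recollement
$$\scrD(T\boxtimes_R S) \longrightarrow \scrD(\Lambda) \longrightarrow \scrD(R).$$
The second hypothesis, $T_R\in \scrD^c(R)$, is precisely the compactness needed so that the relevant tilting bimodule is perfect on the right; this is the standard input to extend a recollement one step further via Brown representability, producing a ladder of height $2$.

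\textbf{Second ladder.} Because $\Lambda$ has a triangular-matrix shape with corner rings $S$ and $T$, the corner idempotents yield a canonical recollement
$$\scrD(T) \longrightarrow \scrD(\Lambda) \longrightarrow \scrD(S),$$
which extends to a ladder of height $2$ without any further hypothesis; this is standard for triangular matrix rings (all the required bimodules are projective/perfect on the appropriate side by construction).

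\textbf{Combining.} Applying Theorem \ref{thm:reduction} to each ladder independently gives
$$\text{Han's conjecture for }\Lambda \iff \text{Han's conjecture for }(T\boxtimes_R S)\times R,$$
$$\text{Han's conjecture for }\Lambda \iff \text{Han's conjecture for }S\times T,$$
and transitivity yields the theorem.

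The main obstacle is the first ladder: one must verify carefully that the Chen--Xi recollement exists in the precise form stated above and, more importantly, that the compactness assumption $T_R\in\scrD^c(R)$ really delivers all the additional adjoints needed to pass from a recollement to a height-$2$ ladder. The second ladder and the final bookkeeping via Theorem \ref{thm:reduction} are essentially formal.
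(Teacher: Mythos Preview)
Your proposal is correct and follows essentially the same route as the paper. The paper takes $\Lambda=\left(\begin{smallmatrix}S&M\\0&T\end{smallmatrix}\right)$, invokes \cite[Theorem 1.1]{CX21} for the recollement $\scrD(T\boxtimes_RS)\to\scrD(\Lambda)\to\scrD(R)$, and then uses $T_R\in\scrD^c(R)$ to show $j^!(\Lambda)\simeq (T\oplus\mathrm{Con}(\mu))[-1]$ is compact so that the recollement extends downwards via Lemma~\ref{lem:extending-recollement-downward}; your ``second ladder'' is exactly what the paper packages as Corollary~\ref{han-tri}. The only small discrepancy is that the extension step is not about a tilting bimodule being perfect but about $j^!$ preserving compacts, which is verified by the concrete computation of $j^!(\Lambda)$ just mentioned.
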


In Section~\ref{sec-skew-gentle}, we apply Theorem \ref{thm:reduction}  to skew-gentle algebras and prove that Han's conjecture holds for this class of algebras.

In Section~\ref{sec-category-algebra}, we apply Theorem \ref{thm:reduction}  to category algebras of finite EI categories and GLS algebras associated with Cartan matrices, and show that Han's conjecture holds for these two classes of algebras.
\medskip

%

Throughout this paper, let $\bfk$ be a commutative ring. All rings are $\bfk$-algebras and ring homomorphisms are $\bfk$-linear. All triangulated categories and triangle functors are also assumed to be $\bfk$-linear.

\bigskip

\section{Preliminaries}
In this section, we shall   recall notations, definitions and  basic facts about recollements and ladders of triangulated categories, and  Keller's localisation theorem and splitting theorem for  Hochschild homology.

\subsection{Recollements and ladders}\

In this subsection, we recall and establish some basic results on recollements of triangulated
categories. This notion was  introduced by  A. A. Beilinson, J. Bernstein, and P. Deligne in \cite{BBD81} to describe intersection homology and  perverse sheaves over singular spaces.

\begin{defn}(\cite[1.4]{BBD81})
Let $\mathcal{T}$, $\mathcal{T}'$ and $\mathcal{T}''$ be triangulated $k$-categories.
  A \emph{recollement} of $\mathcal{T}$ by $\mathcal{T'}$ and $\mathcal{T''}$ is a diagram of six triangle functors
  \begin{align}\label{diag:rec}
   \xymatrixcolsep{4pc}\xymatrix{\mathcal{T'} \ar@<0ex>[r]|{i_*=i_!}  &\mathcal{T} \ar@<-2ex>[l]|{i^*}
\ar@<2ex> [l]|{i^!}  \ar@<0ex>[r]|{j^!=j^*}  &\mathcal{T''} \ar@<-2ex>[l]|{j_!} \ar@<2ex>[l]|{j_{*}}
  }
  \end{align}
which satisfies the following conditions:
\begin{itemize}
\item[(R1)] $(i^\ast,i_\ast)$,\,$(i_!,i^!)$,\,$(j_!,j^!)$ ,\,$(j^\ast,j_\ast)$
are adjoint pairs;
\item[(R2)] $i_*,~j_*,~j_!$ are fully faithful$;$
\item[(R3)] $j^*\circ i_*=0$ (thus $i^*\circ j_!=0$ and $i^!\circ j_*=0$);
\item[(R4)] for any object $X$ in $\mathcal{T}$, there are two triangles in $\mathcal{T}$ induced by counit and unit adjunctions$:$
    $$ \xymatrix@R=0.5pc{i_*i^!X \ar[r] &X \ar[r] & j_*j^*X \ar[r]& i_*i^!X[1],\\
    j_!j^*X \ar[r] &X \ar[r] & i_*i^*X \ar[r]& j_!j^*X[1].} $$
\end{itemize} \end{defn}

Let us recall the language of compactly generated triangulated categories.
Assume that  a triangulated category $\mathcal{T}$ admits small coproducts (that is, coproducts indexed over sets exist  in $\mathcal{T}$). An object $X$ of $\mathcal{T}$ is called \emph{compact} if $\Hom_{\mathcal{T}}(X,?)$ commutes with infinite direct sums. The full subcategory of $\mathcal{T}$ consisting of all compact objects is denoted by $\mathcal{T}^c$. For a class  $\mathcal{S}$ of objects of $\mathcal{T}$, We write ${\rm Tria}(\mathcal{S})$ for the smallest triangulated subcategory of $\mathcal{T}$ containing $\mathcal{S}$ and closed under taking direct sums.
A set $\mathcal{S}$ is said to be a set of \emph{compact generators} of $\mathcal{T}$ if
all objects in $\mathcal{S}$ are compact and $\mathcal{T}={\rm Tria}(\mathcal{S})$. In this case, $\mathcal{T}$ is called a \emph{compactly generated triangulated category}.
Examples of compactly generated triangulated categories include unbounded derived categories $\scrD(R):=\scrD(\Mod R)$ of the category $\Mod R$ of all right  modules over a (differential graded) ring $R$ whose subcategory of compact objects is the  bounded homotopy category of  finitely generated projective modules $\scrK^b(\proj R)$ and unbounded derived categories of quasi-coherent sheaves $\scrD(\mathrm{Qch}(X))$ on a nice scheme $X$  whose subcategory of compact objects are the full subcategory of  perfect complexes $\mathrm{per}(X)$.

Recall that a sequence of triangulated categories $\mathcal{T'}\stackrel{F}{\to}\mathcal{T}\stackrel{G}{\to}\mathcal{T''}$ is said to be a \emph{short exact sequence (up to direct summands)} if $F$ is fully faithful, $G\circ F=0$ and the induced functor $\overline{G}\colon \mathcal{T}/\mathcal{T'}\to\mathcal{T''}$ is an equivalence (up to direct summands). 

\begin{thm}[{\cite[Theorem 2.1]{Nee92}}]
\label{thm:recollement-vs-ses}
%

Given a recollement \eqref{diag:rec} of compactly generated triangulated categories, then both $j_!$ and $i^*$ preserve compact objects and there exists a short exact sequence up to direct summands:
 $${\calT'}^{c} \stackrel{i^*}{\leftarrow}  {\calT}^{c} \stackrel{j_!}{\leftarrow} {\calT''}^{c}.$$
\end{thm}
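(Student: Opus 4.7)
The plan is built on two pillars: a general compactness-preservation criterion for adjoint functors, and Neeman's Verdier-quotient theorem relating compact objects before and after localization.

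First I would record the standard adjoint criterion: a coproduct-preserving triangle functor between compactly generated triangulated categories preserves compact objects if and only if its right adjoint preserves small coproducts. Applying this in our setting, $i^{*}$ has right adjoint $i_{*}$, and $i_{*}$ is itself a left adjoint (to $i^{!}$), so it preserves all colimits and in particular coproducts; hence $i^{*}$ preserves compactness. Similarly, $j_{!}$ has right adjoint $j^{!}=j^{*}$, which is a left adjoint (to $j_{*}$) and so preserves coproducts, giving that $j_{!}$ preserves compactness as well. Both functors therefore restrict to triangle functors on the compact subcategories.

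Next I would extract the short-exact-sequence structure directly from the recollement axioms. By (R2) the functor $j_{!}$ is fully faithful on $\calT''$, so its restriction to $\calT''^{\,c}$ is fully faithful; by (R3) the composition $i^{*}\circ j_{!}$ vanishes. The localization triangle $j_{!}j^{*}X\to X\to i_{*}i^{*}X\to$ from (R4) exhibits $i^{*}$ as the Verdier localization of $\calT$ at its kernel, which is precisely the essential image of $j_{!}$. Since $\calT''$ is compactly generated by $\calT''^{\,c}$ and $j_{!}$ preserves both coproducts and compactness, the localizing subcategory $\ker(i^{*})\subseteq \calT$ is itself compactly generated, namely by the set $j_{!}(\calT''^{\,c})$. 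The Thomason--Neeman localization theorem then tells us that the induced functor
$$\calT^{c}/j_{!}(\calT''^{\,c})\longrightarrow \calT'^{\,c}$$
is fully faithful and dense up to direct summands: every compact object of $\calT'$ is a direct summand of some $i^{*}(X)$ with $X\in \calT^{c}$. Together with the vanishing $i^{*}\circ j_{!}=0$ and the full faithfulness of $j_{!}|_{\calT''^{\,c}}$, this is exactly the assertion of short exactness up to direct summands.

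The main technical obstacle is the Thomason--Neeman theorem on compact generation of Verdier quotients, whose proof rests on Brown representability and the fact that a localizing subcategory generated by compact objects is smashing, so that passage to the Verdier quotient is compatible with the restriction to compact subcategories (up to splitting idempotents). Once this input is granted, the remainder is essentially formal: the adjoint identities and the recollement triangles supply the vanishing, the full faithfulness of $j_{!}$, and the identification of the quotient $\calT/\mathrm{ess\,im}(j_{!})$ with $\calT'$, and assembling these with the localization theorem yields the displayed short exact sequence.
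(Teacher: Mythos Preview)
The paper does not supply a proof of this theorem: it is stated as a citation of \cite[Theorem 2.1]{Nee92} and used as a black box. Your sketch is a correct derivation of the recollement formulation from Neeman's general localization theorem, with the standard adjoint argument for compactness preservation; in particular, the key step---that $\ker(i^{*})$ is the localizing subcategory generated by the compact set $j_{!}(\calT''^{\,c})$, so that Neeman's theorem applies to identify $(\calT')^{c}$ with the idempotent completion of $\calT^{c}/j_{!}(\calT''^{\,c})$---is exactly the intended reduction.
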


\begin{remark}
\label{Remark: from small to large}
The converse of the above result is also true for algebraic triangulated categories (=stable categories of Frobenius categories). More precisely, given   a short exact sequence (up to direct summands)
   of small algebraic triangulated categories:
 $$\calP'\stackrel{i^*}{\leftarrow}  \calP \stackrel{j_!}{\leftarrow} \calP''$$
  then there exists   a recollement \eqref{diag:rec} of compactly generated algebraic triangulated categories  such that
$${\calT'}^{c}\simeq\calP', {\calT}^{c}\simeq\calP,  {\calT''}^{c}\simeq\calP'' $$
(these equivalences are up to direct summands).

This result is folklore. Let us give a sketch of proof.  By Keller's dg lifting (see \cite{Kel94}), there exists a small dg category $\calA$ such that its derived category  $\scrD(\calA)$ has its full subcategory of compact objects equivalent to $\calP$ up to direct summands.  Since $\calP''$ is a set of compact objects in  $\scrD(\calA)$, let $\calT''=\Tria(\calP'')$. Then  ${\calT''}^{c}\simeq\calP''$ and  by   \cite[Theorem 2.1]{Nee92}, there exists   a recollement \eqref{diag:rec} such that
${\calT'}^{c}\simeq\calP'$ with $\calT'=\calT/\calT''$.

\end{remark}

Recollements of derived categories of rings are often related to stratifying  idempotents.
Let $R$ be a ring and $e$ be an idempotent element of $R$. Recall that the ideal $ReR$ (or the idempotent $e$) is called \emph{stratifying} (see \cite[2.1.1 Definition ]{CPS96}), if the following two conditions hold:
\begin{itemize}
\item[(SI1)] The multiplication map $Re\otimes_{eRe}eR \to ReR$ is an isomorphism;

\item[(SI2)] ${\rm Tor}^{eRe}_i(Re,eR)=0$, for all $i\ge 1$.
\end{itemize}
The surjective ring homomorphism $\lambda:R\to R/ReR$ induces a fully faithful functor $\lambda_{*}: \Mod R/ReR\to \Mod R$. Then it is known from \cite[Remark 2.1.2]{CPS96} that the ideal $ReR$ is stratifying if and only if $\lambda:R\to R/ReR$ is a homological ring epimorphism if and only if the induced functor $D(\lambda_*):\scrD(R/ReR)\to \scrD(R)$ is fully faithful. In this case, there exists a recollement:
\begin{align} \label{diag:rec2}
 \xymatrixcolsep{4pc}\xymatrix{\scrD(R/ReR) \ar@<0ex>[r]|{D(\lambda_*)}  &\scrD(R) \ar@<-2ex>[l]|{?\otimes^{\mathbb{L}}_{R}Re}
\ar@<2ex> [l]  \ar@<0ex>[r]|{?\otimes^{\mathbb{L}}_{R}R/ReR}  &\scrD(eRe) \ar@<-2ex>[l]|{?\otimes^{\mathbb{L}}_{eRe}eR} \ar@<2ex>[l]
  }
  \end{align}
which  by Theorem~\ref{thm:recollement-vs-ses},   induces  a short exact sequence up to direct summands:
$$\scrK^b(\proj R/ReR) \stackrel{?\otimes^{\mathbb{L}}_{R}R/ReR}{\longleftarrow}  \scrK^b(\proj R ) \stackrel{?\otimes^{\mathbb{L}}_{eRe}eR}{\longleftarrow}  \scrK^b(\proj eRe ).$$

Let us recall the notion of ladders of triangulated categories which is a refinement  of recollements.

\begin{defn} [{\cite[Definition 1.2.1]{BGS88}}]
A \emph{ladder} of $\mathcal{T}$ by $\mathcal{T'}$ and $\mathcal{T''}$ is a  diagram with finite or infinite triangle functors
\vspace{-10pt}$${\setlength{\unitlength}{0.7pt}
\begin{picture}(200,170)
\put(0,70){$\xymatrix@!=3pc{\mathcal{T}'
\ar@<+2.5ex>[r]\ar@<-2.5ex>[r] &\mathcal{T}
\ar[l] \ar@<-5.0ex>[l]\ar@<+5.0ex>[l]
\ar@<+2.5ex>[r] \ar@<-2.5ex>[r] &
\mathcal{T}''\ar[l] \ar@<-5.0ex>[l]
\ar@<+5.0ex>[l]}$}
\put(52.5,10){$\vdots$}
\put(137.5,10){$\vdots$}
\put(52.5,130){$\vdots$}
\put(137.5,130){$\vdots$}
\end{picture}}$$
such that any three consecutive rows form a recollement. 
The \emph{height} of a ladder is the number of recollements
contained in it (counted with multiplicities).
\end{defn}

The hight of a ladder  is an element of $\mathbb{N}\cup \{0,\infty\}$. A recollement is  exactly a ladder of height $1$.

We say that a recollement \eqref{diag:rec} \emph{extends one step downwards}  if both $i^!$ and $j_*$ have right adjoint functors. 
Actually, we have the following notion of extending the recollement one step upwards.

\begin{lem}[{\cite[Lemma 2.11]{JYZ23}}]
\label{lem:extending-recollement-downward}
Given a recollement \eqref{diag:rec}, assume that $\mathcal{T}$, $\mathcal{T}'$ and $\mathcal{T}''$ admit small coproducts and they are compactly generated. Then the following conditions are equivalent:
\begin{itemize}
\item[$(1)$] $i_*$ sends compact objects to compact objects,

\item[$(2)$] $j^*$ sends compact objects to compact objects,

\item[$(3)$] $i^!$ has a right adjoint,

\item[$(4)$] $j_*$ has a right adjoint,

\item[$(5)$] the recollement extends one step downwards.
\end{itemize}
In this case, we can obtain a ladder of $\mathcal{T}$ by $\mathcal{T}'$ and $\mathcal{T}''$ whose height is equal to $2$.
\end{lem}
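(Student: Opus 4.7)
The strategy is to reduce each of the five conditions to preservation of small coproducts by $i^!$ or $j_*$, then use Brown representability to produce the new right adjoints, and finally assemble the second recollement explicitly.

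First, since $\mathcal{T}, \mathcal{T}', \mathcal{T}''$ are compactly generated, I would invoke two standard principles: (a) a triangle functor out of a compactly generated triangulated category admits a right adjoint if and only if it preserves small coproducts, and (b) for a triangle functor $F$ with right adjoint $G$ between compactly generated categories, $F$ preserves compact objects if and only if $G$ preserves coproducts. Applied to the adjoint pairs $(i_*, i^!)$ and $(j^*, j_*)$, these simultaneously give $(1)\Leftrightarrow(3)$---both equivalent to $i^!$ preserving coproducts---and $(2)\Leftrightarrow(4)$---both equivalent to $j_*$ preserving coproducts.

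To close the cycle, I would prove $(3)\Leftrightarrow(4)$ using the localisation triangle from (R4), namely $i_*i^!X\to X\to j_*j^*X\to i_*i^!X[1]$. The left adjoints $i^*, i_*, j_!, j^*$ always preserve coproducts. For a family $(X_\alpha)$ in $\mathcal{T}$, I compare the triangle applied to $\coprod X_\alpha$ with the coproduct of the individual triangles. If $i^!$ preserves coproducts, then so does $i_*i^!$, and the two-out-of-three property of triangles forces $j_*j^*\coprod X_\alpha\cong \coprod j_*j^*X_\alpha$; combined with $j^*$ preserving coproducts and its essential surjectivity (from $j^*j_*\cong \mathrm{id}$), this yields that $j_*$ preserves coproducts. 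The converse is symmetric, using the full faithfulness of $i_*$ (so $i^*i_*\cong \mathrm{id}$) to descend preservation from $i_*i^!$ to $i^!$. Since $(5)$ is by definition the conjunction $(3)\wedge(4)$, all five conditions are equivalent.

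Assuming $(5)$, let $F\colon \mathcal{T}'\to\mathcal{T}$ and $G\colon \mathcal{T}''\to\mathcal{T}$ denote the right adjoints of $i^!$ and $j_*$. To produce the ladder, I would exhibit a second recollement of $\mathcal{T}$ in which the roles of $\mathcal{T}'$ and $\mathcal{T}''$ swap: $\mathcal{T}''$ becomes the subcategory and $\mathcal{T}'$ the quotient, realised through the adjoint chains $j^*\dashv j_*\dashv G$ and $i_*\dashv i^!\dashv F$. The vanishing $i^!\circ j_*=0$ required for the new (R3) is already implicit in the original recollement; the full faithfulness of the new outer functor $F$ would be extracted from $i^!i_*\cong \mathrm{id}_{\mathcal{T}'}$ together with the $i^!\dashv F$ adjunction; and the two new distinguished triangles for (R4) are produced from the units and counits of the new adjunctions. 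Stacked with the original recollement, this yields a ladder of height $2$. The most delicate points, in my view, are the symmetric triangle manipulation for $(3)\Leftrightarrow(4)$ and the verification of the full faithfulness of $F$ in the newly assembled recollement.
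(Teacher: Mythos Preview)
The paper does not prove this lemma; it is quoted from \cite[Lemma 2.11]{JYZ23} without argument, so there is no proof in the paper to compare against. Your sketch is correct and is the standard route one would take.

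Two remarks that tighten the points you flag as delicate. First, the full faithfulness of $F$ is not subtle: in any adjoint triple $L\dashv M\dashv R$, the functor $L$ is fully faithful if and only if $R$ is. Applied to $i_*\dashv i^!\dashv F$ with $i_*$ fully faithful from the original recollement, this immediately gives that $F$ is fully faithful; the same argument on $j^*\dashv j_*\dashv G$ handles $G$. Second, for the new recollement you only need to produce one new (R4) triangle: the original triangle $i_*i^!X\to X\to j_*j^*X\to$ already serves as the ``lower'' triangle $(\text{new }j_!)(\text{new }j^*)X\to X\to(\text{new }i_*)(\text{new }i^*)X\to$. The remaining one is obtained by completing the counit $j_*GX\to X$ to a triangle and identifying the cone with $Fi^!X$ via the vanishing $i^!j_*=0$ and the full faithfulness of $F$.

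Your reductions $(1)\Leftrightarrow(3)$ and $(2)\Leftrightarrow(4)$ via Neeman's criterion and Brown representability, and the two-out-of-three argument on the localisation triangle for $(3)\Leftrightarrow(4)$, are exactly right.
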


\subsection{Keller's localisation theorem and splitting theorem for Hochschild homology}\

In  this subsection, we recall Keller's localisation theorem and splitting theorem for Hochschild homology which is one of main ingredients of the proof of Theorem \ref{thm:reduction}.


\begin{thm}[{\cite[3.1 Theorem]{Kel98}\cite[2.4 Theorem (c)]{Kel99}}]\label{hh-der}
Let $\bbA$, $\bbB$ and $\bbC$ be dg algebras which are flat over $\bfk$. Suppose that there is a recollement among the derived categories
$\scrD(\bbA)$, $\scrD(\bbB)$ and $\scrD(\bbC)$ of dg algebras $\bbA$, $\bbB$ and $\bbC$:
\begin{align*}
\xymatrixcolsep{4pc}\xymatrix{
\mathscr{D}(\bbB) \ar@<0ex>[r] &\mathscr{D}(\bbA) \ar@<-2ex>[l]\ar@<2ex>[l] \ar@<0ex>[r]  &\mathscr{D}(\bbC). \ar@<-2ex>[l] \ar@<2ex>[l]
}
\end{align*}
Then there exists a long exact sequence
$$\cdots\rar \HH_n(\bbC)\rar \HH_n(\bbA)\rar \HH_n(\bbB)\rar \HH_{n-1}(\bbC)
\cdots\rar \HH_0(\bbC)\rar \HH_0(\bbA)\rar \HH_0(\bbB)\rar 0.$$
\end{thm}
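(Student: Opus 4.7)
The plan is to interpret Hochschild homology as a localizing invariant of pretriangulated dg categories and to extract the long exact sequence from a localization triangle associated with the compact parts of the recollement. First I would invoke Keller's Morita invariance: for any $\bfk$-flat dg algebra $\bbA$, the Hochschild chain complex $\C_*(\bbA)$ is quasi-isomorphic to the Hochschild chain complex of the canonical pretriangulated dg enhancement $\mathcal{A}$ of $\scrD(\bbA)^c$, so that $\HH_*(\bbA)$ depends only on $\scrD(\bbA)^c$ together with its dg enhancement up to Morita equivalence.

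Next, applying Theorem \ref{thm:recollement-vs-ses} to the given recollement produces a short exact sequence (up to direct summands)
$$\scrD(\bbB)^c \stackrel{i^*}{\longleftarrow} \scrD(\bbA)^c \stackrel{j_!}{\longleftarrow} \scrD(\bbC)^c,$$
which I would lift to a diagram $\mathcal{C}\to\mathcal{A}\to\mathcal{B}$ of pretriangulated dg enhancements, with $\mathcal{C}$ sitting inside $\mathcal{A}$ as a quasi-full sub-dg-category whose Drinfeld quotient is quasi-equivalent to $\mathcal{B}$ up to direct summands. The main input is then Keller's localization theorem for Hochschild (or more generally mixed) complexes of dg categories: any such short exact sequence of pretriangulated dg categories induces a distinguished triangle
$$\C_*(\mathcal{C})\longrightarrow \C_*(\mathcal{A})\longrightarrow \C_*(\mathcal{B})\longrightarrow \C_*(\mathcal{C})[1]$$
in $\scrD(\bfk)$. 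Passing to homology and applying Morita invariance then converts this triangle into the required long exact sequence, while the termination at $\HH_0(\bbB)\longrightarrow 0$ reflects the essential surjectivity of the quotient functor $\mathcal{A}\to\mathcal{B}$ and the vanishing of $\HH_{-1}$.

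The hardest part will be two technical points. First, the sequence extracted from the recollement is only exact up to direct summands, so I need to know that both Hochschild homology and the localization triangle are invariant under Karoubi (idempotent) completion of pretriangulated dg categories; this should follow from the standard observation that adjoining an idempotent changes the Hochschild complex only by a quasi-isomorphism, so the triangle descends from the Karoubi completions to the original dg enhancements. Second, the flatness hypothesis on $\bbA$, $\bbB$, and $\bbC$ is needed precisely to guarantee that the naive bar model for $\C_*$ actually computes the derived Hochschild complex and is compatible with the passage to dg enhancements; without flatness one would have to cofibrantly replace before taking the tensor product over the enveloping dg algebra, and tracking such replacements through the lifted short exact sequence would become significantly more delicate.
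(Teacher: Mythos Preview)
The paper does not supply its own proof of this theorem; it is simply quoted from Keller \cite[3.1 Theorem]{Kel98} and \cite[2.4 Theorem (c)]{Kel99}. Your sketch is precisely the strategy of Keller's original argument: restrict the recollement to compacts via Theorem~\ref{thm:recollement-vs-ses}, lift the resulting short exact sequence (up to direct summands) to a localization sequence of dg categories, apply the localization triangle for the mixed/Hochschild complex, and use Morita and Karoubi invariance to identify the terms with $\HH_*(\bbA)$, $\HH_*(\bbB)$, $\HH_*(\bbC)$. So your approach is correct and coincides with what the paper is citing.

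One small caveat: your justification of the termination ``$\HH_0(\bbB)\to 0$'' by the vanishing of $\HH_{-1}$ is only automatic when the dg algebras are concentrated in non-negative degrees (e.g.\ ordinary rings, which is the case of interest later in the paper); for arbitrary dg algebras the long exact sequence extends in both directions. This is a feature of the statement as written rather than a flaw in your argument, but it is worth flagging.
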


As an application of the above localisation theorem, B. Keller showed that when the recollement becomes a ladder of height two, the long exact sequence of Hochschild homology groups splits.
\begin{thm}[{\cite[2.9 Proposition (b)]{Kel98}}]\label{thm:splitting-dg}
Let $\bbA$, $\bbB$ and $\bbC$ be dg $k$-algebras  which are flat over $\bfk$ with a ladder of height $2$
$$\xymatrixcolsep{4pc}
\xymatrix{
\scrD(\bbB) \ar@<0.8ex>[r]\ar@<-2.4ex>[r]
&\scrD(\bbA) \ar@<-2.4ex>[r]  \ar@<-2.4ex>[l]
\ar@<0.8ex>[l] \ar@<0.8ex>[r]
&\scrD(\bbC). \ar@<-2.4ex>[l] \ar@<0.8ex>[l]
}$$
Then   for all $n\in \mathbb{N}$,  $$\HH_n(\bbA)\simeq \HH_n(\bbB) \oplus  \HH_n(\bbC).$$
\end{thm}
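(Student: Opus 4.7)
The plan is to produce an explicit section of the key map in the long exact sequence given by Theorem~\ref{hh-der}. Applied to the lower recollement in the ladder, that theorem yields
\begin{equation*}
\cdots \to \HH_n(\bbC) \xrightarrow{\alpha_n} \HH_n(\bbA) \xrightarrow{\beta_n} \HH_n(\bbB) \xrightarrow{\partial_n} \HH_{n-1}(\bbC) \to \cdots,
\end{equation*}
in which $\beta_n$ is the morphism induced by the restriction functor $i^*\colon\scrD(\bbA)\to\scrD(\bbB)$, viewed via Keller's dg enhancement as a morphism between dg categories. Once we exhibit a section of $\beta_n$ for every $n$, it follows that $\partial_n=0$ for all $n$ and that the long exact sequence decomposes into split short exact sequences
\begin{equation*}
0\to\HH_n(\bbC)\to\HH_n(\bbA)\to\HH_n(\bbB)\to 0,
\end{equation*}
which immediately yields the desired decomposition $\HH_n(\bbA)\simeq\HH_n(\bbB)\oplus\HH_n(\bbC)$.

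The natural candidate for the section is $\HH_n(i_*)\colon\HH_n(\bbB)\to\HH_n(\bbA)$, induced by the fully faithful inclusion $i_*\colon\scrD(\bbB)\hookrightarrow\scrD(\bbA)$. In order for $i_*$ to define a morphism at the level of Hochschild homology within Keller's formalism \cite{Kel98,Kel99}, it must be represented by a dg bimodule that is perfect on the appropriate side, equivalently, $i_*$ must preserve compact objects. This is exactly the content of the ladder of height~$2$ hypothesis: by Lemma~\ref{lem:extending-recollement-downward}, the existence of the upper recollement in the ladder is equivalent to $i_*$ and $j^*$ sending compact objects to compact objects, so the required dg bimodule and hence the map $\HH_n(i_*)$ genuinely exist.

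Granted this, the recollement identity $i^*\circ i_*\simeq\mathrm{id}_{\scrD(\bbB)}$ translates, under the functoriality of Keller's construction, into the equality
\begin{equation*}
\HH_n(i^*)\circ\HH_n(i_*)=\HH_n(i^*\circ i_*)=\mathrm{id}_{\HH_n(\bbB)},
\end{equation*}
so $\HH_n(i_*)$ is the sought section of $\beta_n=\HH_n(i^*)$, which concludes the argument. A symmetric route, using the fully faithful $j_!$ (or $j_*$, whose compact-preservation is also encoded in the ladder hypothesis) to produce a retraction of $\alpha_n$, would work equally well and is a useful consistency check.

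The main technical obstacle is verifying that the abstract map $\beta_n$ produced by the Keller localisation sequence of Theorem~\ref{hh-der} truly coincides with the functorially induced $\HH_n(i^*)$ coming from the dg bimodule representing $i^*$, and that the compositional identity above is preserved by the Hochschild complex construction (rather than only up to higher homotopy that might interact nontrivially with the connecting map). These compatibilities are built into the dg-category formalism of \cite{Kel98,Kel99}, but they must be extracted from the definition of the Hochschild complex of the dg quotient category that underlies the long exact sequence.
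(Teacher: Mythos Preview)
The paper does not actually prove Theorem~\ref{thm:splitting-dg}: it is stated with attribution to Keller \cite[2.9 Proposition (b)]{Kel98}, and the only indication of method is the sentence ``As an application of the above localisation theorem, B.~Keller showed that when the recollement becomes a ladder of height two, the long exact sequence of Hochschild homology groups splits.'' Your proposal is precisely a fleshing-out of that one-line hint and is the standard argument: the ladder hypothesis forces $i_*$ (and $j^*$) to preserve compacts via Lemma~\ref{lem:extending-recollement-downward}, hence $i_*$ is represented by a perfect bimodule and induces a map on Hochschild homology; the adjunction identity $i^*i_*\simeq\mathrm{id}$ then splits the long exact sequence of Theorem~\ref{hh-der}.

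One small point worth tightening: you should be explicit that the long exact sequence of Theorem~\ref{hh-der} arises from the short exact sequence of \emph{compact} objects $\scrD^c(\bbC)\xrightarrow{j_!}\scrD^c(\bbA)\xrightarrow{i^*}\scrD^c(\bbB)$ (Theorem~\ref{thm:recollement-vs-ses}), so that the map you call $\beta_n$ is by construction $\HH_n(i^*)$; this disposes of the ``technical obstacle'' you flag in your final paragraph rather than leaving it as a loose end. With that clarification, the argument is complete and matches the intended route.
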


\begin{remark}  The above theorem could be stated in a more general setup, say the homotopy category of small differential graded categories in the sense of G. Tabuada (see \cite{Tab05a, Tab05b}).
Roughly speaking, a localising homological invariant such as Hochschild homology, cyclic homology, K-theory etc, will have such a splitting property, see \cite[Th\'eor\`eme 6.3(4)]{Tab05b}.
So Han's conjecture could also be stated in this setup (see \cite{Ste24}).
\end{remark}

\section{A reduction theorem for Han's conjecture with applications to  Morita context rings and exact contexts}
\label{sec-han-conj}
In this section, we first prove Theorem \ref{thm:reduction} in the introduction. Then we apply Theorem \ref{thm:reduction} to ladders induced by Morita context rings and exact contexts.

\subsection{A reduction theorem for Han's conjecture}

Recall that  Han's conjecture for a ring $R$: if the Hochschild homology groups $\HH_n(R)$ vanish for all sufficiently large $n$, then $R$ has finite global dimension.

The following observation is a direct consequence of \cite[Theorem I]{AKLY17a} and  Theorem~\ref{hh-der}. For reader's convenience, we include a proof here.

\begin{lem}\label{han-rec-1}
Let $R$, $S$ and $T$ be rings. Assume that there is a recollement of unbounded derived categories
\begin{align*}
\xymatrixcolsep{4pc}
\xymatrix{
\mathscr{D}(S) \ar@<0ex>[r]
&\mathscr{D}(R) \ar@<-2ex>[l] \ar@<2ex>[l] \ar@<0ex>[r]  &\mathscr{D}(T). \ar@<-2ex>[l] \ar@<2ex>[l]
}
\end{align*}
If Han's conjecture holds for $R$, then Han's conjecture holds for $S\times T$.
\end{lem}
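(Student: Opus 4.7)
\medskip

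The plan is to run the implication in the contrapositive/direct form using two ingredients: Keller's long exact sequence (Theorem~\ref{hh-der}) to transfer Hochschild-homology vanishing from $S$ and $T$ to $R$, and the result of Angeleri H\"ugel--Koenig--Liu--Yang \cite[Theorem~I]{AKLY17a} to transfer finite global dimension from $R$ back to $S$ and $T$. Concretely, suppose that Han's conjecture holds for $R$, and assume $\HH_n(S\times T)=0$ for all sufficiently large $n$; we need to prove $\gldim(S\times T)<\infty$.

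First I would record the elementary identification $\HH_n(S\times T)\cong \HH_n(S)\oplus \HH_n(T)$, which follows from the fact that the enveloping algebra of a product splits as a direct sum of the two enveloping algebras plus a nilpotent piece that contributes $0$ to Hochschild homology; alternatively it comes from $\bfk$-linearity of the bar complex and the central idempotents. Thus the assumption becomes $\HH_n(S)=0$ and $\HH_n(T)=0$ for all $n\gg 0$. Feeding this into the localisation sequence of Theorem~\ref{hh-der}, applied to the given recollement
$$\scrD(S)\;\longrightarrow\;\scrD(R)\;\longrightarrow\;\scrD(T),$$
the exact sequence
$$\cdots\to \HH_n(T)\to \HH_n(R)\to \HH_n(S)\to \HH_{n-1}(T)\to\cdots$$
forces $\HH_n(R)=0$ for all sufficiently large $n$. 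Invoking the hypothesis that Han's conjecture holds for $R$, we conclude $\gldim(R)<\infty$.

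At this point I would apply \cite[Theorem~I]{AKLY17a}, which in the presence of a recollement of unbounded derived categories of rings asserts that finiteness of $\gldim(R)$ implies finiteness of both $\gldim(S)$ and $\gldim(T)$. Since $\gldim(S\times T)=\max\{\gldim(S),\gldim(T)\}$, we deduce $\gldim(S\times T)<\infty$, finishing the proof. The only potentially delicate point is a mild flatness assumption needed to legitimately apply Keller's localisation theorem in the dg setting; since $\bfk$ plays the role of a ground commutative ring and the rings appearing are $\bfk$-algebras in the conjecture, this is harmless (and in fact automatic over a field, the case to which Han's conjecture is classically addressed), so no genuine obstacle arises and the argument is essentially a bookkeeping combination of the two cited theorems.
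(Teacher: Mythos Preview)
Your proof is correct and follows essentially the same route as the paper's own argument: Keller's localisation sequence (Theorem~\ref{hh-der}) to pass the eventual vanishing of Hochschild homology from $S$ and $T$ to $R$, then \cite[Theorem~I]{AKLY17a} to pass finite global dimension back from $R$ to $S$ and $T$. You are slightly more explicit than the paper in spelling out the identifications $\HH_n(S\times T)\cong\HH_n(S)\oplus\HH_n(T)$ and $\gldim(S\times T)=\max\{\gldim(S),\gldim(T)\}$, and in noting the flatness hypothesis, but there is no substantive difference.
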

\begin{proof}
Suppose that $R$ satisfies Han's conjecture. If the Hochschild homology groups $\HH_n(S)$ and $\HH_n(T)$ vanish for all sufficiently large $n$, then it follows from Theorem~\ref{hh-der}
that the Hochschild homology groups $\HH_n(R)$ vanish for all sufficiently large $n$. Since $R$ satisfies Han's conjecture, $R$ has finite global dimension. By \cite[Theorem I]{AKLY17a}, we get that $S$ and $T$ have finite global dimensions. Thus $S\times T$ satisfies Han's conjecture.
\end{proof}

\begin{proof}[{\bf Proof of Theorem \ref{thm:reduction}}]
By \cite[Theorem I]{AKLY17a}, we get the global dimension of $R$ is finite if and only if those of $S$ and $T$ are finite. Applying Theorem \ref{thm:split}, we obtain that the Hochschild homology groups $\HH_n(R)$ vanish for all sufficiently large $n$ if and only if the Hochschild homology groups $\HH_n(S)$ and $\HH_n(T)$ vanish for all sufficiently large $n$.
Then Theorem \ref{thm:reduction} follows immediately.
\end{proof}

\begin{remark} For three rings $R, S, T$, the existence of a ladder of height two
\[ \xymatrixcolsep{4pc}\xymatrix{\scrD(S) \ar@<1ex>[r]|{i_*=i_!} \ar@<-3ex>[r]  &\scrD(R) \ar@<-3ex>[r]  \ar@<-3ex>[l]|{i^*}
\ar@<1ex> [l]  \ar@<1ex>[r]|{j^!=j^*}  &\scrD(T). \ar@<-3ex>[l]|{j_!} \ar@<1ex>[l]
  } \]
is equivalent to say that there exists a left  recollement (or a colocalisation sequence):
$$\xymatrixcolsep{4pc}
\xymatrix{
\scrK^b(\proj S) \ar@<-1ex>[r]|{i_*=i_!}
&\scrK^b(\proj R)    \ar@<-1ex>[l]|{i^*}
  \ar@<-1ex>[r]|{j^!=j^*}
&\scrK^b(\proj T).  \ar@<-1ex>[l]|{j_!}
}$$
The folklore fact can be shown by using Remark~\ref{Remark: from small to large} and Theorem~\ref{thm:recollement-vs-ses}.
\end{remark}

\subsection{Applications to   Morita context rings}\label{subsec-han-conj-Morita}\

In this subsection, we apply Theorem \ref{thm:reduction} to ladders arising from Morita context rings.

Let $B$ and $C$ be rings, $_BN_C$ and $_CM_B$ be $B$-$C$-bimodule and $C$-$B$-bimodule respectively, and $\alpha: N\otimes_C M \to B$ and $\beta: M\otimes_B N \to C$ be $B$-bimodule and $C$-bimodule homomorphisms respectively. Recall that the sextuple $(B,C,N,M,\alpha,\beta)$ is a Morita context (see \cite{Morita58}) if $$\alpha(n\otimes m) n' = n \beta(m\otimes n') \mbox{ and } \beta (m\otimes n) m' = m\alpha (n\otimes m')\mbox{ for all }m,m'\in M\mbox{ and }n,n'\in N.$$
Associated with a Morita context, we define the \emph{Morita context ring} $A:=\left(\begin{smallmatrix}                                            B & _BN_C \\                                              _CM_B & C                                            \end{smallmatrix}\right)_{(\alpha, \beta)}$ (see \cite{Morita58}), where the addition of elements of $A$ is componentwise and the multiplication is the matrix multiplication, namely
$$\begin{pmatrix}
  b & n \\
  m & c
\end{pmatrix}\begin{pmatrix}
  b' & n' \\
  m' & c'
\end{pmatrix}=\begin{pmatrix}
 bb'+\alpha(n\otimes m') & bn'+nc' \\
 mb'+cm' & \beta(m\otimes n')+ cc'
 \end{pmatrix}.$$
Let $e=\left(\begin{smallmatrix}
1_B & 0 \\
0 & 0
\end{smallmatrix}\right)$ and $f=\left(\begin{smallmatrix}
0 & 0 \\
0 & 1_C
\end{smallmatrix}\right)$ be the idempotent elements of $A$. Then we obtain easily that $eAe= B$, $A/AeA= C/{\rm Im}(\beta)$, $fAf=C$ and $A/AfA= B/{\rm Im}(\alpha)$, where ${\rm Im}(\alpha)$ and ${\rm Im}(\beta)$ denote the image of $\alpha$ and $\beta$, respectively.

\begin{thm}\label{Hanconj-Morita context}
Let $A$ be the above Morita context ring.

$(1)$ If $\beta:M\otimes_BN\to C$ is monomorphism, ${\rm Tor}^B_i(M,N)=0$ for all $i\ge 1$, and $M_B\in \scrD^c(B)$, then Han's conjecture holds for $A$ if and only if it holds for $B\times C/{\rm Im}(\beta)$.

$(2)$ If $\alpha:N\otimes_CM\to B$ is monomorphism, ${\rm Tor}^C_i(N,M)=0$ for all $i\ge 1$, and $N_C\in \scrD^c(C)$, then Han's conjecture holds for $A$ if and only if it holds for $B/{\rm Im}(\alpha)\times C$.
\end{thm}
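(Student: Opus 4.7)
The plan is to realize $A$ as the middle term of a ladder of height $2$ whose two outer terms are $\scrD(B)$ and $\scrD(C/\im(\beta))$ (for part (1)) or $\scrD(B/\im(\alpha))$ and $\scrD(C)$ (for part (2)), and then to invoke Theorem \ref{thm:reduction}. I will only sketch part (1), since part (2) is entirely symmetric: it is obtained by interchanging the roles of the orthogonal idempotents $e=\left(\begin{smallmatrix} 1_B & 0 \\ 0 & 0 \end{smallmatrix}\right)$ and $f=\left(\begin{smallmatrix} 0 & 0 \\ 0 & 1_C \end{smallmatrix}\right)$ of $A$, using $fAf=C$, $A/AfA=B/\im(\alpha)$ and $Af\cong N\oplus C$ as a right $C$-module.

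For part (1), I would first verify that the ideal $AeA$ is stratifying in the sense of (SI1) and (SI2), so as to obtain the recollement
$$\scrD(C/\im(\beta))\leftrightarrows \scrD(A)\leftrightarrows \scrD(B)$$
from \eqref{diag:rec2}. A direct matrix computation gives $Ae\otimes_B eA \cong \left(\begin{smallmatrix} B & N \\ M & M\otimes_B N \end{smallmatrix}\right)$ and $AeA \cong \left(\begin{smallmatrix} B & N \\ M & \im(\beta) \end{smallmatrix}\right)$ as $A$-bimodules, and the natural surjection between them is an isomorphism precisely because $\beta$ is assumed to be a monomorphism; this is (SI1). Condition (SI2) unwinds to $\Tor^B_i(M,N)=0$ for $i\geq 1$, which is the second hypothesis.

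The next step is to promote this recollement to a ladder of height $2$ by means of Lemma \ref{lem:extending-recollement-downward}. I would verify condition (2) of that lemma, namely that $j^*=?\otimes^{\mathbb{L}}_A Ae:\scrD(A)\to\scrD(B)$ preserves compact objects. Since $A$ is a compact generator of $\scrD(A)$ whose thick closure is $\scrD^c(A)$, it suffices to show that $j^*(A)\cong Ae$ lies in $\scrD^c(B)$; but $Ae\cong B\oplus M$ as a right $B$-module, and this is perfect precisely because $M_B\in\scrD^c(B)$ is our third hypothesis. Applying Theorem \ref{thm:reduction} then yields the desired equivalence.

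I do not expect a serious obstacle: the three hypotheses in (1) are in essentially one-to-one correspondence with the three conditions (SI1), (SI2) and compactness of $j^*(A)$ needed to run this two-step argument. The most delicate bookkeeping point is the clean identification of the surjection $Ae\otimes_B eA\twoheadrightarrow AeA$, making sure that the map induced on the $(2,2)$-entry is exactly $\beta:M\otimes_B N\to\im(\beta)\subseteq C$, so that (SI1) really does become the injectivity of $\beta$; the analogous identification for $f$ in part (2) converts (SI1) into the injectivity of $\alpha$.
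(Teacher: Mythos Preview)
Your proposal is correct and follows essentially the same approach as the paper: verify that $AeA$ is stratifying (reducing (SI1) to the injectivity of $\beta$ and (SI2) to the vanishing of $\Tor^B_i(M,N)$), then use $Ae\cong B\oplus M$ together with $M_B\in\scrD^c(B)$ and Lemma~\ref{lem:extending-recollement-downward} to extend the recollement \eqref{diag:rec2} one step downwards, and finally apply Theorem~\ref{thm:reduction}. The only cosmetic difference is that the paper checks (SI1) by reducing the injectivity of $Ae\otimes_{eAe}eA\to A$ step by step to that of $fAe\otimes_{eAe}eAf\to fAf$, whereas you compute the full $2\times 2$ matrix description at once; these are the same verification.
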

\begin{proof}
We only prove (1) since a similar argument works for (2). An easy computation shows that $Ae = eAe\oplus fAe\simeq B\oplus M$ as right $B$-modules and $eA = eAe\oplus eAf\simeq B\oplus N$ as left $B$-modules. Due to ${{\rm Tor}^B_i(M,N)}=0$ for all $i\ge 1$, we obtain
$${\Tor}^{eAe}_i (Ae, eA) \simeq {\Tor}^{B}_i (B\oplus M, B\oplus N) \simeq {\Tor}^{B}_i (M, N)=0.$$
Note that the canonical map $\mu_{A}: Ae\otimes_{eAe}eA\rar A$ is a monomorphism if and only if the map $fAe\otimes_{eAe}eA\rar fA$ is a monomorphism if and only if the map $fAe\otimes_{eAe}eAf\rar fAf$ is a monomorphism, i.e. the map $\beta: M\otimes_BN\rar C$ is a monomorphism. By assumption, the ideal $AeA$ is stratifying and there exists a recollement
\begin{equation}\label{rec:Morita context}
\xymatrixcolsep{4pc}\xymatrix{
\scrD(C/{\rm Im}(\beta)) \ar@<0ex>[r]
&\scrD(A) \ar@<-2ex>[l]\ar@<2ex>[l] \ar@<0ex>[r]|{?\otimes^{\mathbb{L}}_{A}Ae}
&\scrD(B). \ar@<-2ex>[l]|{?\otimes^{\mathbb{L}}_{eAe}eA} \ar@<2ex>[l]
}
\end{equation}
As $M_B\in \scrD^c(B)$, we get $A\otimes^{\mathbb{L}}_{A}Ae_B\simeq Ae_B\simeq (B\oplus M)_B$, which is compact in { $\scrD(B)$}. This implies that $?\otimes^{\mathbb{L}}_{A}Ae$ sends compact objects to compact objects.
By Lemma \ref{lem:extending-recollement-downward}, the recollement (\ref{rec:Morita context}) extends one step downwards and there is a ladder of $\scrD(A)$ by $\scrD(C/{\rm Im}(\beta))$ and $\scrD(B)$ whose height is $2$. Now, the statement follows from Theorem \ref{thm:reduction}.
\end{proof}

By { \cite[Example 3.4]{AKLY17a}}, a triangular matrix ring $A:=\left(\begin{smallmatrix}                                            B &  0 \\                                              _CM_B & C                                            \end{smallmatrix}\right)$ induces a ladder of height $2$:
$$\xymatrixcolsep{4pc}
\xymatrix{
\scrD(B) \ar@<0.8ex>[r]\ar@<-2.4ex>[r]
&\scrD(A) \ar@<-2.4ex>[r]  \ar@<-2.4ex>[l]
\ar@<0.8ex>[l] \ar@<0.8ex>[r]
&\scrD(C). \ar@<-2.4ex>[l] \ar@<0.8ex>[l]
}$$

As a direct consequence of Theorem \ref{Hanconj-Morita context}, we have the following result.
\begin{cor}[{\cite[Theorem 2.21]{CRS21}}]\label{han-tri}
Han's conjecture holds for a triangular matrix ring if and only if it holds for the diagonal subring.
\end{cor}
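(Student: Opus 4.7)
The plan is to deduce the corollary as an immediate specialization of Theorem~\ref{thm:reduction}, using the ladder already cited in the paragraph preceding the corollary. Specifically, for the triangular matrix ring $A=\left(\begin{smallmatrix} B & 0 \\ {}_CM_B & C\end{smallmatrix}\right)$, \cite[Example 3.4]{AKLY17a} furnishes a ladder of height $2$
\[
\xymatrixcolsep{4pc}
\xymatrix{
\scrD(B) \ar@<0.8ex>[r]\ar@<-2.4ex>[r]
&\scrD(A) \ar@<-2.4ex>[r] \ar@<-2.4ex>[l] \ar@<0.8ex>[l] \ar@<0.8ex>[r]
&\scrD(C), \ar@<-2.4ex>[l] \ar@<0.8ex>[l]
}
\]
so the hypotheses of Theorem~\ref{thm:reduction} are satisfied with $R=A$, $S=B$, $T=C$. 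The theorem then yields that Han's conjecture holds for $A$ if and only if it holds for $B\times C$, which is exactly the diagonal subring of $A$.

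As a small sanity check, one can equivalently derive this from Theorem~\ref{Hanconj-Morita context}(1): in the triangular case $N=0$, the bimodule map $\beta\colon M\otimes_B N\to C$ is the zero map from the zero module, hence trivially a monomorphism with ${\rm Im}(\beta)=0$, and ${\rm Tor}^B_i(M,N)=0$ for all $i\ge 1$ is automatic. The only nontrivial condition of Theorem~\ref{Hanconj-Morita context}(1) is $M_B\in \scrD^c(B)$; however, this compactness hypothesis was used there only to force the recollement \eqref{rec:Morita context} to extend one step downwards, and in the triangular situation that extension is already guaranteed by the reference to \cite{AKLY17a}. So in either route, we arrive at the ladder and invoke Theorem~\ref{thm:reduction}.

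There is essentially no obstacle to this proof: the ladder is given by the cited reference, and the reduction theorem applies verbatim. The only point meriting a line of explanation is the identification of $S\times T=B\times C$ as the diagonal subring, and the implicit remark that Han's conjecture for a product ring is equivalent to Han's conjecture for each factor (a consequence of the fact that both Hochschild homology and global dimension decompose under finite products). Thus the entire proof reduces to ``apply Theorem~\ref{thm:reduction} to the ladder of \cite[Example~3.4]{AKLY17a}.''
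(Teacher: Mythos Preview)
Your proposal is correct and matches the paper's approach: the paper does not give a separate proof of Corollary~\ref{han-tri} but simply exhibits the height-$2$ ladder from \cite[Example~3.4]{AKLY17a} and declares the corollary a direct consequence, which is exactly what you do by invoking Theorem~\ref{thm:reduction}. Your aside about Theorem~\ref{Hanconj-Morita context}(1) is also apt, since the paper literally phrases the corollary as a consequence of that theorem even though the compactness hypothesis $M_B\in\scrD^c(B)$ is not assumed; you correctly observe that this hypothesis only served to produce the downward extension, which here is already supplied by the cited ladder.
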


\subsection{Applications to  exact contexts}\label{subsec-han-conj-exactcontext}\

In this section, we apply Theorem \ref{thm:reduction} to
ladders  arising from exact contexts introduced in \cite{CX19}. This kind of ladders involves noncommutative localizations in ring theory, which occur often in algebraic topology and representation theory (see \cite{Ran06, Sch85}).

Let $(\lambda,\mu,M,m)$ be a fixed exact context, where $\lambda:R\to S$ and $\mu:R\to T$ are ring homomorphisms, and  $M$ is an $S$-$T$-bimodule with an element $m\in M$. Let $T\boxtimes_RS$ be the noncommutative tensor
product of $(\lambda,\mu,M,m)$. Note that $T\boxtimes_RS$ has $T\otimes_RS$ as its abelian group, while its multiplication is different from the usual tensor product (see { \cite[Section 3]{CX19}} for details). Define
$$\Lambda:=\begin{pmatrix}
S& M\\ 0 & T\end{pmatrix},\;
\Gamma:=\begin{pmatrix}
T\boxtimes_RS& T\boxtimes_RS\\
T\boxtimes_RS& T\boxtimes_RS
\end{pmatrix},\;
\theta:=\begin{pmatrix}
\rho & \beta\\
0 & \phi
\end{pmatrix}: \; \Lambda \longrightarrow \Gamma,$$
where $\rho: S\to T\boxtimes_RS,\; s\mapsto 1\otimes s$ for $s\in S$, $\phi: T\to T\boxtimes_RS,\; t\mapsto t\otimes 1$ for $t\in
T$, and $\beta: M\rightarrow T\otimes_RS$ is the unique $R$-$R$-bimodule homomorphism such that $\phi=\beta(m\cdot)$ and $\rho=\beta(\cdot m)$.

Let $\varphi:(0,T)\longrightarrow (S,M),\;(0,t)\mapsto (0, mt) \mbox{ for } t\in T$. Then $\varphi$ is a homomorphism of $R$-$\Lambda$-bimodules. By $\cpx{P}$ we denote the mapping cone of $\varphi$. Clearly, $\cpx{P}$ is a complex in $\mathscr{C}(R^{\op} \otimes_{\mathbb{Z}}\Lambda)$ and $\cpx{P}$ is perfect in { $\mathscr{D}(\Lambda)$; see \cite[Section 5.2]{CX19} for example}.

{ An exact context $(\lambda,\mu,M,m)$ is said to be \emph{homological} if ${\rm Tor}^R_i(T,S)=0$ for all $i\geq 1$; see \cite[Section 1]{CX21} for example. Recall from \cite[Lemma 3.2 (2)]{CX21} that $(\lambda,\mu,M,m)$ is homological if and only if  $\theta:\Lambda\to \Gamma$ is a homological ring epimorphism.}
\begin{thm}
\label{Hanconj-exact context}
Let $(\lambda, \mu, M, m)$ be a homological exact context. Assume $T_R\in\scrD^c(R)$, that is, $T_R$ has a finite projective resolution by finitely generated projective $R$-modules.
Then Han's conjecture holds for $(T\boxtimes_RS) \times R$ if and only if it holds for $S\times T$.
\end{thm}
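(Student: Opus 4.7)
The plan is to bridge Han's conjecture for $(T\boxtimes_R S) \times R$ and that for $S \times T$ through Han's conjecture for the triangular matrix ring $\Lambda = \begin{pmatrix} S & M \\ 0 & T \end{pmatrix}$ as an intermediate step. This requires two inputs: a ladder of height two relating $\scrD(\Lambda)$, $\scrD(T\boxtimes_R S)$ and $\scrD(R)$ coming from the homological exact context, and Corollary \ref{han-tri} applied to the triangular matrix ring $\Lambda$.

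For the first ingredient, I would appeal to the construction of Chen--Xi. Since $(\lambda, \mu, M, m)$ is homological, $\theta : \Lambda \to \Gamma$ is a homological ring epimorphism, so the restriction functor $\theta_*$ embeds $\scrD(\Gamma)$ fully faithfully into $\scrD(\Lambda)$; via the obvious Morita equivalence this identifies $\scrD(\Gamma)$ with $\scrD(T\boxtimes_R S)$. On the other side, the complex $\cpx{P}$ is perfect in $\scrD(\Lambda)$ as the mapping cone of a morphism between finitely generated projective $\Lambda$-modules, and the defining exact sequence of the exact context identifies its derived endomorphism ring with $R$. These ingredients assemble into a recollement
$$\scrD(T\boxtimes_R S) \simeq \scrD(\Gamma) \stackrel{\theta_*}{\hookrightarrow} \scrD(\Lambda) \twoheadrightarrow \scrD(R).$$
To promote this recollement to a ladder of height two I would invoke Lemma \ref{lem:extending-recollement-downward}, which reduces the task to showing that one of the six functors preserves compact objects. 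The assumption $T_R \in \scrD^c(R)$, together with the exact sequence $0 \to R \to S \oplus T \to M \to 0$ of the exact context, is precisely what yields a finite resolution of the relevant object by finitely generated projective modules, guaranteeing that compact objects are preserved under the appropriate functor.

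With the ladder of height two in place, Theorem \ref{thm:reduction} immediately gives that Han's conjecture holds for $\Lambda$ if and only if it holds for $(T\boxtimes_R S) \times R$. On the other hand, Corollary \ref{han-tri}, applied to the triangular matrix ring $\Lambda$, gives that Han's conjecture holds for $\Lambda$ if and only if it holds for $S \times T$. Chaining the two equivalences yields the desired conclusion.

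The main obstacle is the first step: verifying that the recollement obtained from the homological exact context upgrades to a ladder of height two under the single hypothesis $T_R \in \scrD^c(R)$. The recollement itself is essentially available from \cite{CX19, CX21}, so the real work lies in tracking how perfectness of $T_R$ propagates through the exact sequence of the exact context to yield the compactness statement demanded by Lemma \ref{lem:extending-recollement-downward}. Once that verification is carried out, the remainder of the proof is a short concatenation of Theorem \ref{thm:reduction} and Corollary \ref{han-tri}.
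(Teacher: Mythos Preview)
Your proposal is correct and follows essentially the same route as the paper: invoke the Chen--Xi recollement $\scrD(T\boxtimes_R S)\hookrightarrow\scrD(\Lambda)\twoheadrightarrow\scrD(R)$ from \cite{CX21}, upgrade it to a ladder of height two via Lemma~\ref{lem:extending-recollement-downward} using $T_R\in\scrD^c(R)$, then conclude by Theorem~\ref{thm:reduction} and Corollary~\ref{han-tri}. The only point the paper makes more explicit is the compactness verification: it computes $j^!(\Lambda)\simeq (T\oplus\mathrm{Con}(\mu))[-1]$ in $\scrD(R)$, where $\mathrm{Con}(\mu)$ is the cone of $\mu:R\to T$, so compactness of $j^!(\Lambda)$ follows directly from $T_R\in\scrD^c(R)$; this is exactly the propagation through the exact sequence that you outlined.
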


\begin{proof}
{ By \cite[Theorem 1.1]{CX21}, there is a recollement of derived categories of rings
	\begin{align}\label{rec:exact context1}
		\xymatrixcolsep{4pc}\xymatrix{
			\mathscr{D}(T\boxtimes_RS) \ar@<0ex>[r]|{} &\mathscr{D}(\Lambda) \ar@<-2ex>[l]_{} \ar@<2ex>[l]^{} \ar@<0ex>[r]|{j^!=j^*}  &\mathscr{D}(R), \ar@<-2ex>[l]|{j_!} \ar@<2ex>[l]|{}
		}
	\end{align}}
	where $j_!:=?\otimes^{\mathbb{L}}_R\cpx{P}$ and $j^!:=\cpx{\Hom}_{\Lambda}(\cpx{P},?)$. By an easy calculation, we know that
	$$j^{!}(\Lambda)=\cpx{\Hom}_{\Lambda}(\cpx{P},\Lambda)\simeq (T\oplus {\rm Con}(\cdot m))[-1]\in \scrD(R),$$ where ${\rm Con}(\cdot m)$ stands for the two-term complex $0\to S \stackrel{\cdot m}{\rightarrow} M\to 0$ with $S$ of degree $-1$.
	Since the sequence $0\rightarrow R \stackrel{\left({\lambda}\atop{\mu}\right)}{\longrightarrow}
	S\oplus T
	\stackrel{(\cdot m,-m \cdot)}{\longrightarrow}
	M\rightarrow 0$ is exact, we have ${\rm Con}(\cdot m)\simeq {\rm Con}(\mu)$ in $\scrD(R)$, where ${\rm Con}(\mu)$ stands for the two-term complex $0\to R \stackrel{\mu}{\rightarrow} T\to 0$ with $R$ of degree $-1$. Then $$j^{!}(\Lambda)\simeq (T\oplus {\rm Con}(\mu))[-1]\in \scrD(R),$$ Due to $T_R\in \scrD^c(R)$, $j^{!}(\Lambda)$ is compact in $\scrD(R)$. This implies that $j^{!}$ sends compact objects to compact objects.
	By Lemma \ref{lem:extending-recollement-downward}, the recollement (\ref{rec:exact context1}) extends one step downwards and there is a ladder of $\mathscr{D}(\Lambda)$ by $\mathscr{D}(T\boxtimes_RS)$ and $\mathscr{D}(R)$ whose height is 2. Then, Theorem \ref{Hanconj-exact context} follows from Theorem \ref{thm:reduction} and Corollary \ref{han-tri}.
\end{proof}

Now, let us state several consequences of Theorem \ref{Hanconj-exact context}. Firstly, we utilize Theorem \ref{Hanconj-exact context} to Han's conjecture of ring extensions.

\begin{cor}\label{Hanconj-ring extension}
Suppose that $R\subseteq S$ is an extension of rings, that is, $R$ is a subring of the ring $S$ with the same identity. Let $S'$ be the endomorphism ring of the right $R$-module $S/R$. If the right $R$-module $S'$ is projective and finitely generated, then
Han's conjecture holds for $(S'\boxtimes_RS)\times R$ if and only if it holds for $S\times S'$, where $S'\boxtimes_RS$ is the noncommutative tensor product of an exact context defined by the extension.
\end{cor}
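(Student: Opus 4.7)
The plan is to apply Theorem~\ref{Hanconj-exact context} to the exact context canonically attached to the ring extension $R\subseteq S$. First I would recall from \cite{CX19} that such an extension gives rise to an exact context $(\lambda,\mu,M,m)$ in which $\lambda:R\hookrightarrow S$ is the inclusion, $T=S'=\End_R(S/R)$, $M=S/R$ equipped with its natural $S$-$S'$-bimodule structure, $\mu:R\to S'$ is the canonical map induced by the $R$-bimodule structure of $S/R$, and $m=\overline{1_S}\in S/R$. By construction the defining sequence
\[
0\longrightarrow R \stackrel{\left({\lambda}\atop{\mu}\right)}{\longrightarrow} S\oplus S' \stackrel{(\cdot m,\,-m\cdot)}{\longrightarrow} S/R \longrightarrow 0
\]
of abelian groups is exact, and the noncommutative tensor product $S'\boxtimes_R S$ is precisely the ring appearing in the statement of the corollary.

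Next I would verify the two hypotheses of Theorem~\ref{Hanconj-exact context}. Projectivity of $S'_R$ immediately yields $\Tor^R_i(S',S)=0$ for all $i\ge 1$, so the exact context is homological. Projectivity together with finite generation gives $S'_R\in\proj R\subseteq\scrK^b(\proj R)\simeq\scrD^c(R)$, which is exactly the required compactness condition $T_R\in\scrD^c(R)$.

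With both hypotheses in place, Theorem~\ref{Hanconj-exact context} applies and delivers the desired equivalence: Han's conjecture holds for $(S'\boxtimes_R S)\times R$ if and only if it holds for $S\times S'$. The only genuine obstacle is to correctly match the exact context to the extension; once the Chen--Xi construction is cited, the corollary reduces to the two direct verifications above and carries no further content.
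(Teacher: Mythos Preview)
Your overall strategy is exactly the paper's: exhibit the Chen--Xi exact context attached to the extension, observe that projectivity of $S'_R$ gives the homological condition ${\rm Tor}^R_i(S',S)=0$ and that finite generation plus projectivity gives $S'_R\in\scrD^c(R)$, then invoke Theorem~\ref{Hanconj-exact context}. Those two verifications are correct and are all that is needed once the right exact context is in hand.

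However, your description of the exact context itself is wrong. You take $M=S/R$ with $m=\overline{1_S}$, but $S/R$ carries no left $S$-module structure in general: for $s\cdot\overline{x}:=\overline{sx}$ to be well defined one would need $sR\subseteq R$ for every $s\in S$, which forces $S=R$. Thus the map $\cdot\, m:S\to S/R$ in your displayed sequence is not defined, and even at the level of abelian groups the sequence cannot be short exact, since exactness would force (say for finite-dimensional algebras over a field) $\dim S'=0$. The correct quadruple, used in the paper following \cite[Section~4]{CX21} rather than \cite{CX19}, is $\big(\lambda,\lambda',\Hom_R(S,S/R),\pi\big)$, where $\pi:S\to S/R$ is the canonical surjection and $\Hom_R(S,S/R)$ carries its natural $S$-$S'$-bimodule structure. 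Once you substitute this for $(S/R,\overline{1_S})$, the remainder of your argument goes through verbatim and coincides with the paper's proof.
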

\begin{proof}
Let $\lambda:R\rightarrow S$ be the inclusion, $\pi:S\rightarrow S/R$ the
canonical surjection and $\lambda':R\rightarrow S'$ the induced map by right multiplication.
Recall from \cite[Section 4]{CX21} that the quadruple $\big(\lambda, \lambda', \Hom_R(S,S/R),\pi \big)$ is an exact context and the noncommutative tensor product $S'\boxtimes_RS$ of this exact context is well defined. Assume that $S'_R$ is finitely generated and projective. It is clear that  ${\rm Tor}^R_i(S', S)=0$ for all $i\geq 1$.
Then, Corollary \ref{Hanconj-ring extension} follows from Theorem \ref{Hanconj-exact context}.
\end{proof}

Next, we apply Theorem \ref{Hanconj-exact context} to trivial extensions. Recall that, given a ring $R$ and an $R$-$R$-bimodule $M$, the \emph{trivial extension} of $R$ by $M$ is a ring, denoted by $R\ltimes M$, with abelian group $R\oplus M$ and multiplication: $(r,m)(r',m')=(rr',rm'+mr')$ for $r,r'\in R$ and $m,m'\in M$.

\begin{cor}\label{Hanconj-trival extension}
Let $\lambda: R\to S$ be a ring epimorphism and $M$ be an $S$-$S$-bimodule such that ${\rm Tor}^R_i(M,S)$ = $0$ for all $i\geq 1$. Assume $M_R\in \scrD^c(R)$, then
Han's conjecture holds for $(S\ltimes M)\times R$ if and only if it holds for $S\times (R\ltimes M)$.
\end{cor}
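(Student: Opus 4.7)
My plan is to realise $(S \ltimes M,\, R \ltimes M)$ as the pair $(T \boxtimes_R S,\, T)$ of a homological exact context in the sense of \cite{CX19}, and then to invoke Theorem~\ref{Hanconj-exact context}. To this end, I would set $T := R \ltimes M$, let $\mu : R \to T$ be the canonical inclusion $r \mapsto (r, 0)$, and observe that, because $M$ is an $S$-$S$-bimodule, the assignment $(r, m) \mapsto (\lambda(r), m)$ defines a ring homomorphism $R \ltimes M \to S \ltimes M$. Pulling back right multiplication along this homomorphism turns $N := S \ltimes M$ into an $S$-$T$-bimodule compatibly with its left $S$-action; I would take $n := (1_S, 0) \in N$ as the distinguished element.

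A direct calculation then shows that the map $(s, (r, m)) \mapsto s \cdot n - n \cdot (r, m) = (s - \lambda(r), -m)$ is surjective with kernel equal to the image of $(\lambda, \mu)$, so the sequence
$$0 \to R \xrightarrow{(\lambda, \mu)} S \oplus T \xrightarrow{(\cdot n,\, -n\cdot)} N \to 0$$
is exact and $(\lambda, \mu, N, n)$ is an exact context. The additive decomposition $T = R \oplus M$ as a right $R$-module yields $\mathrm{Tor}_i^R(T, S) \cong \mathrm{Tor}_i^R(M, S) = 0$ for all $i \geq 1$, so the exact context is homological; likewise $T_R \in \scrD^c(R)$, since both $R_R$ and $M_R$ are compact.

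The substantive step is to identify the noncommutative tensor product $T \boxtimes_R S$ with $S \ltimes M$ as rings. Because $\lambda$ is a ring epimorphism and $M$ is a right $S$-module, the multiplication map $M \otimes_R S \to M$ is an isomorphism, producing an identification of underlying abelian groups
$$T \boxtimes_R S = T \otimes_R S \cong S \oplus (M \otimes_R S) \cong S \oplus M = S \ltimes M.$$
Under this identification, the canonical ring homomorphisms $\rho : S \to T \boxtimes_R S$ and $\phi : T \to T \boxtimes_R S$ become the evident inclusions $s \mapsto (s, 0)$ and $(r, m) \mapsto (\lambda(r), m)$ into $S \ltimes M$, while the map $\beta : N \to T \otimes_R S$ of the excerpt becomes the identity on $S \oplus M$. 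Unpacking from \cite{CX19} the definition of the multiplication on $T \boxtimes_R S$, the swap rule—which computes $\rho(s)\phi(r, m)$ from $s n$ and $n (r, m)$ via $\beta$—then reduces to the trivial-extension product $(s, 0)(\lambda(r), m) = (s\lambda(r), sm)$. Hence $T \boxtimes_R S \cong S \ltimes M$ as rings, and Theorem~\ref{Hanconj-exact context} immediately yields the corollary.

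I expect the only genuinely delicate step to be this final matching of ring structures, which requires careful unpacking of the definition of the noncommutative tensor product from \cite{CX19}. The choices of $N$, its bimodule structure, and the distinguished element $n$ are essentially forced by the shape of the exact-context sequence, and the remaining homological and compactness hypotheses follow at once from the additive splitting $T = R \oplus M$ together with the assumptions on $M$.
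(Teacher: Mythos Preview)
Your proposal is correct and follows essentially the same route as the paper: both set $T=R\ltimes M$ with the inclusion $\mu:R\to T$, take the $S$-$T$-bimodule $S\ltimes M$ with distinguished element its identity, verify that this yields a homological exact context with $T\boxtimes_R S\simeq S\ltimes M$ and $T_R\in\scrD^c(R)$, and then invoke Theorem~\ref{Hanconj-exact context}. The only difference is that the paper outsources the exact-context verification and the ring identification $T\boxtimes_R S\simeq S\ltimes M$ to the proof of \cite[Corollary~3.19]{CX17}, whereas you sketch these computations directly.
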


\begin{proof}
Let $T:=R\ltimes M$, $\mu: R\rightarrow T$ be
the inclusion from $R$ into $T$. By the proof of \cite[Corollary 3.19]{CX17}, $(\lambda, \mu, S\ltimes M, 1 )$ is a homological exact context and $T\boxtimes_RS\simeq S\ltimes M$ as rings. As $M_R\in \scrD^c(R)$, we get $T_R\in \scrD^c(R)$. Then Corollary \ref{Hanconj-trival extension} follows immediately from Theorem \ref{Hanconj-exact context}.
\end{proof}

Now, we apply Theorem \ref{Hanconj-exact context} to pullback squares of rings and surjective homomorphisms.

\begin{cor}\label{Hanconj-pullback square}
Let $R$ be a ring, and $I_1$, $I_2$ be ideals of $R$ such that $I_1\cap I_2=0$. Assume $R/I_2\in \scrD^c(R)$, then
Han's conjecture holds for $R/(I_1+I_2)  \times R$ if and only if it holds for $R/I_1\times R/I_2$.
\end{cor}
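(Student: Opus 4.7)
The plan is to realise the pullback situation determined by $I_1$ and $I_2$ as a homological exact context and then invoke Theorem~\ref{Hanconj-exact context} directly.

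First, set $S := R/I_1$, $T := R/I_2$, $M := R/(I_1+I_2)$ and $m := 1 + (I_1+I_2)$, and take $\lambda\colon R \to S$ and $\mu\colon R \to T$ to be the canonical surjections. Then $M$ is naturally an $S$-$T$-bimodule via the further quotients $S \twoheadrightarrow M$ and $T \twoheadrightarrow M$, and these two quotients coincide with the maps $\cdot m$ and $m\cdot$ determined by the chosen element $m$. The hypothesis $I_1 \cap I_2 = 0$ is exactly what makes the defining sequence
$$0 \longrightarrow R \xrightarrow{(\lambda,\ \mu)} S \oplus T \xrightarrow{(\cdot m,\ -m\cdot)} M \longrightarrow 0$$
of an exact context exact: injectivity of the first map is precisely $I_1 \cap I_2 = 0$, surjectivity of the second is clear, and exactness in the middle reduces to the standard pullback observation that if $a - b = x + y$ with $x \in I_1$ and $y \in I_2$, then $c := a - x = b + y$ witnesses $(a + I_1,\, b + I_2) = (c + I_1,\, c + I_2)$.

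Next I would verify that this exact context is homological, i.e.\ that $\Tor^R_i(T, S) = 0$ for every $i \ge 1$. This is the step I expect to be the main obstacle, and it is the place where the assumption $R/I_2 \in \scrD^c(R)$ is really used: the plan is to combine a finite resolution of $T$ by finitely generated projective $R$-modules with the short exact sequence displayed above in order to identify $T \otimes^{\mathbb{L}}_R S$ with $R/(I_1+I_2)$ concentrated in degree zero. Once this is established, the comparison map $T \otimes_R S \to T \boxtimes_R S$ is an isomorphism of rings, so the noncommutative tensor product simplifies to $T \boxtimes_R S \cong T \otimes_R S = R/(I_1+I_2)$.

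Finally, the compactness assumption $T_R = R/I_2 \in \scrD^c(R)$ is exactly the hypothesis required by Theorem~\ref{Hanconj-exact context}. Applying that theorem to the homological exact context $(\lambda, \mu, M, m)$ just constructed yields that Han's conjecture holds for $(T \boxtimes_R S) \times R = R/(I_1 + I_2) \times R$ if and only if it holds for $S \times T = R/I_1 \times R/I_2$, which is the conclusion of the corollary.
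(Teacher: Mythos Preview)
Your route matches the paper's: set up the exact context with $S=R/I_1$, $T=R/I_2$, $M=R/(I_1+I_2)$ and $m=1$, identify $T\boxtimes_R S$ with $R/(I_1+I_2)$, and invoke Theorem~\ref{Hanconj-exact context}. The paper does not argue the homological property or the ring identification by hand; it simply defers both to the proof of \cite[Corollary~3.20]{CX17}.

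The one real gap in your sketch is the homological verification. Compactness of $T_R$ does not by itself force $\Tor^R_i(T,S)=0$ for $i\ge 1$: a bounded projective resolution of $T$ only limits the range of degrees in which these groups can be nonzero, it does not kill them, and feeding the defining short exact sequence into $T\otimes^{\mathbb L}_R(-)$ produces a triangle involving $T\otimes^{\mathbb L}_R T$ and $T\otimes^{\mathbb L}_R M$, neither of which you control without further hypotheses. (For instance, with $R=\bfk[x,y]/(xy)$, $I_1=(x)$, $I_2=(y)$ one has $I_1\cap I_2=0$ but $\Tor^R_2(R/I_2,R/I_1)\cong \bfk$; compactness fails in that example, but it shows that the short exact sequence alone cannot carry the argument.) Note also that the compactness assumption $T_R\in\scrD^c(R)$ is already an explicit hypothesis of Theorem~\ref{Hanconj-exact context}, where it is used to extend the recollement one step downwards; you should not expect it to be the hidden source of the Tor vanishing as well. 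Either cite \cite[Corollary~3.20]{CX17} as the paper does, or isolate and justify whatever additional input that reference supplies.
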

\begin{proof}
We define $\lambda:R\to R/I_1$ and $\mu:R\to R/I_2$ to be the canonical surjective ring homomorphisms. By the proof of \cite[ Corollary 3.20]{CX17}, we know that $(\lambda,\mu,R/(I_1+I_2),1)$ is a homological exact context and $T\boxtimes_RS\simeq R/(I_1+I_2)$ as rings.. Since $R/I_2\in \scrD^c(R)$, then Corollary \ref{Hanconj-pullback square} follows immediately from Theorem \ref{Hanconj-exact context}.
\end{proof}

\section{Han's conjecture for skew-gentle algebras}\label{sec-skew-gentle}

In this section, we will apply our theorem to skew-gentle algebras  and show that Han's conjecture holds for skew-gentle algebras.

Recall a quiver with relations $(Q,I)$ is called a \emph{gentle pair} if the following hold:
\begin{itemize}
\item[(GP1)] Each vertex of $Q$ is start point of at most two arrows, and end point of at most two arrows.
\item[(GP2)] For each arrow $\alpha$ in $Q$, there is at most one arrow $\beta$ with $t(\alpha)=s(\beta)$ such that $\alpha \beta \not\in I$, and at most one arrow $\gamma$ with $t(\gamma)=s(\alpha)$ such that $\gamma\alpha \notin I$.
\item[(GP3)] For each arrow $\alpha$ in $Q$, there is at most one arrow $\beta$ with $t(\alpha)=s(\beta)$ such that $\alpha \beta \in I$, and at most one arrow $\gamma$ with $t(\gamma)=s(\alpha)$ such that $\gamma\alpha \in I$.
\item[(GP4)] The algebra $A(Q,I)$ is finite dimensional.
\end{itemize}
A finite dimensional algebra $A(Q,I)$ is called a \emph{gentle algebra} if $(Q,I)$ is a gentle pair. Let $(Q,I)$ be a gentle pair. We add some special loops in $Q$, and denote the set of special loops by $S_p$. We call $(Q,I,S_p)$ the \emph{skew-gentle triple} if $(Q',I\cup\{\alpha^2|\alpha \in S_p\})$ is  a gentle pair where $Q'$ is the quiver obtained by adding the special loops to $Q$. The finite dimensional algebra $A(Q,I,S_p):=kQ'/\langle I\cup \{\alpha^2-\alpha|\alpha \in S_p\}\rangle$ is called a \emph{skew-gentle algebra} corresponding to the skew-gentle triple $(Q,I,S_p)$.

  \begin{thm}   Han's conjecture holds for skew-gentle algebras.

  \end{thm}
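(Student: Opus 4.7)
The plan is to argue by induction on the number $|S_p|$ of special loops in the skew-gentle triple $(Q,I,S_p)$. The base case $|S_p|=0$ is precisely the class of gentle algebras; since the defining ideal $I$ is generated by paths of length two, gentle algebras are monomial, and Han's conjecture for monomial algebras is the main theorem of \cite{Han06}.

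For the inductive step, I would fix a special loop $\alpha\in S_p$ at a vertex $v$. The relation $\alpha^2=\alpha$ makes $\alpha\in e_vAe_v$ an idempotent, yielding an orthogonal decomposition $e_v=\alpha+(e_v-\alpha)$ in $A$. Then I would pass to the Morita equivalent algebra $\hat{A}$ obtained by refining the primitive idempotent system along this decomposition: equivalently, by splitting $v$ into two new vertices $v^\pm$ corresponding to the idempotents $\alpha$ and $e_v-\alpha$, with each arrow incident to $v$ decomposed into its $\alpha$- and $(e_v-\alpha)$-components. Since Morita equivalence preserves derived categories, Hochschild homology and global dimension, it suffices to verify Han's conjecture for $\hat{A}$.

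The heart of the argument is then to exhibit a stratifying idempotent $e\in\hat{A}$, supported on $v^+$ together with appropriate neighbouring vertices governed by the local combinatorics at $\alpha$, such that (a) both $e\hat{A}e$ and $\hat{A}/\hat{A}e\hat{A}$ are Morita equivalent to skew-gentle algebras with strictly fewer special loops than $A$, and (b) the associated recollement of derived categories~(\ref{diag:rec2}) extends one step downwards in the sense of Lemma~\ref{lem:extending-recollement-downward}, producing a ladder of height~$2$. For (b), the required compactness should follow from the finite-dimensionality of $\hat{A}$, which makes $\hat{A}e$ a finitely generated projective right $e\hat{A}e$-module. Granted (a) and (b), the induction hypothesis applies to $e\hat{A}e$ and $\hat{A}/\hat{A}e\hat{A}$, and Theorem~\ref{thm:reduction} delivers Han's conjecture for $\hat{A}$, and hence for $A$.

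The main obstacle will be the combinatorial content of (a): choosing the correct $e$ after splitting $\alpha$, and then verifying that the subalgebra $e\hat{A}e$ and the quotient $\hat{A}/\hat{A}e\hat{A}$ admit presentations satisfying the skew-gentle axioms (GP1)--(GP4) with one fewer special loop. This requires a careful case analysis of how the arrows incident to $v$ in $Q'$ compose with $\alpha$, namely which of the combinations $\beta\alpha$, $\beta(e_v-\alpha)$, $\alpha\gamma$, $(e_v-\alpha)\gamma$ are nonzero and how the resulting arrows at $v^\pm$ inherit relations, together with the verification that the residual relations at the affected vertices still conform to the gentle-pair axioms. If direct combinatorial unfolding even shows that $\hat{A}$ itself (with all special loops split simultaneously) is a gentle algebra, the inductive argument collapses into a single Morita-equivalence step, but I would still expect the inductive reduction via Theorem~\ref{thm:reduction} to be the cleanest way to handle all configurations uniformly.
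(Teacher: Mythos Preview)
Your approach is quite different from the paper's. The paper does not induct on $|S_p|$ or split idempotents at all: it invokes \cite[Theorem~1.1]{Che22}, which for any skew-gentle algebra $A=A(Q,I,S_p)$ directly provides a recollement of $\scrD(A)$ by $\scrD(B)$ and $\scrD(C)$, where $B=A(Q,I)$ is the underlying gentle algebra and $C$ is an auxiliary finite-dimensional algebra with $\gldim(C)\le 1$. The bound on $\gldim(C)$ forces the recollement to extend to a ladder of height~$2$, and Theorem~\ref{thm:reduction} then reduces Han's conjecture for $A$ to the known cases of $B$ (monomial, \cite{Han06}) and $C$ (finite global dimension). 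No combinatorial case analysis is needed.

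Your proposal contains two concrete errors. First, the assertion that finite-dimensionality of $\hat{A}$ forces $\hat{A}e$ to be \emph{projective} as a right $e\hat{A}e$-module is false in general; finite-dimensionality only gives finite generation, and compactness of $\hat{A}e$ in $\scrD(e\hat{A}e)$ is a genuine extra hypothesis --- compare the condition $M_B\in\scrD^c(B)$ in Theorem~\ref{Hanconj-Morita context}, which is not automatic. Second, your hope that splitting all special loops simultaneously might produce a gentle algebra is unfounded: already for $Q=(1\xrightarrow{a}2\xrightarrow{b}3)$, $I=\{ab\}$, with one special loop $\alpha$ at~$2$, splitting $e_2=\alpha+(e_2-\alpha)$ yields the non-monomial relation $a^{+}b^{+}+a^{-}b^{-}=0$, so $\hat{A}$ is not presented by a gentle pair. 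Thus the ``collapse'' you mention does not occur, and the combinatorial verification of~(a) --- which you rightly flag as the crux --- remains entirely open in your outline. Whether a suitable stratifying $e$ with both pieces skew-gentle and with the ladder extension holding exists in every local configuration is plausible but would require a full argument; the paper sidesteps all of this by importing Chen's recollement wholesale.
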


  \begin{proof}   Let $A=A(Q,I,S_p)$ be a skew-gentle algebra with $S_p\neq \emptyset$.   Denote $B=A(Q,I)$.
  By \cite[Theorem 1.1(a)]{Che22}, there is a recollement of derived categories:
  \begin{align*}
\xymatrixcolsep{4pc}\xymatrix{
\mathscr{D}(B) \ar@<0ex>[r] &\mathscr{D}(A) \ar@<-2ex>[l] \ar@<2ex>[l]\ar@<0ex>[r]  &\mathscr{D}(C). \ar@<-2ex>[l]\ar@<2ex>[l]
}
\end{align*}
By \cite[Theorem 1.1(c)]{Che22}, $C$ is a finite dimensional algebra with $\gldim(C) \leq 1$. Hence, this recollement extends one step downwards (in fact extends to an infinite ladder).

Since $\gldim(C) \leq 1$, $\HH_i(C)=0$ for any $i\geq 1$ by  \cite[Proposition 6]{Kel98},  Han's conjecture holds for $C$; since a gentle algebra is a monomial algebra, by \cite[Theorem 3]{Han06}, Han's conjecture holds for $B$. So by Theorem~\ref{thm:reduction}, Han's conjecture holds for $A$.
\end{proof}

\section{Han's conjecture for category algebras of finite EI categories and GLS algebras}\label{sec-category-algebra}

In this section, we will consider two classes of algebras which can be viewed as triangular matrix algebras: category algebras of finite EI categories and GLS algebras associated with Cartan matrices, and show that Han's conjecture holds for these two classes of algebras.

\subsection{Category algebras}\

Let $\bfk$ be a field and $\mathscr{C}$ be a finite category. Here, finite means that $\mathscr{C}$ has
only finitely many morphisms. Denote by ${\rm Mor}\mathscr{C}$ the finite set of all morphisms in $\mathscr{C}$. The \emph{category algebra} $\bfk\mathscr{C}$ of $\mathscr{C}$ is defined as
follows: $\bfk\mathscr{C}=\bigoplus\limits_{\alpha \in {\rm Mor}\mathscr{C}}\bfk\alpha$ as
a $\bfk$-vector space and the product $*$ is given by the rule
\[\alpha * \beta=\left\{\begin{array}{ll}
\alpha\circ\beta, & \text{ if }\text{$\alpha$ and $\beta$ can be composed in $\mathscr{C}$}; \\
0, & \text{otherwise.}
\end{array}\right.\]
The unit is given by $1_{\bfk\mathscr{C}}=\sum\limits_{ x \in {\rm Obj}{\mathscr{C}} }{\rm Id} _{x}$,
where ${\rm Id}_{x}$ is the identity endomorphism of an object $ x$ in $\mathscr{C}$.

The category $\mathscr{C}$ is called a \emph{finite EI category} provided
that all endomorphisms in $\mathscr{C}$ are isomorphisms. In particular,
${{\rm Hom}_{\mathscr{C}}}(x,x)={{\rm Aut}_{\mathscr{C}}}(x)$ is a finite group for any object $x$ in $\mathscr{C}$.

Let $\mathscr{C}$ and $\mathscr D$ be two equivalent finite categories. Then
$\bfk\mathscr{C}$ and $\bfk\mathscr D$ are Morita equivalent; see
\cite[Proposition 2.2]{Webb07}. In particular, $\bfk\mathscr{C}$ is Morita
equivalent to $\bfk\mathscr{C}_0$, where $\mathscr{C}_0$ is any skeleton of $\mathscr{C}$. So we
may assume that $\mathscr{C}$ is skeletal, that is, any two
distinct objects $x$ and $y$ in $\mathscr{C}$ are not isomorphic.

Let $\mathscr{C}$ be a skeletal finite EI category with $n$ ($n\geq 2$) objects. We assume that
${\rm Obj}\mathscr{C}=\{x_1,x_2,\cdots,x_n\}$ satisfying
${\rm Hom}_{\mathscr{C}}(x_i,x_j)=\emptyset$ if $i>j$. Let $M_{ij}:=\bfk{\rm Hom}_{\mathscr{C}}(x_j,x_i)$.
Write $R_i:=M_{ii}$. We observe that
$R_i=({\rm Id}_{x_i})\bfk\mathscr{C}({\rm Id}_{x_i})=\bfk{\rm Aut}_{\mathscr{C}}(x_i)$ is a group algebra.
Then $M_{ij}$ is naturally an $R_i$-$R_j$-bimodule, and we have a
morphism of $R_i$-$R_j$-bimodules $\psi_{ilj}: M_{il}\otimes_{R_l}
M_{lj} \rightarrow M_{ij}$ which is induced by the composition of
morphisms in $\mathscr{C}$.

The category algebra $\bfk\mathscr{C}$ is isomorphic to the corresponding
	triangular matrix algebra
	\begin{align}\label{cat-tri}
	\bfk\mathscr{C}\simeq\Gamma_{\mathscr{C}}=\left(
		\begin{array}{cccc}
		R_1 & &  &  \\
			M_{21}& R_2 & & \\
	\vdots	& 	\vdots & \ddots &  \\
	 M_{n1}&  M_{n2}& \cdots & R_n \\
		\end{array}
		\right)
		\end{align}
whose multiplication is induced by those morphisms $\psi_{ilj}$; compare \cite[Notation 4.1]{Wang16}.

\begin{thm}
	Han's conjecture holds for category algebras of finite EI categories.
\end{thm}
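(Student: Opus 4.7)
The plan is to reduce Han's conjecture for $\bfk\mathscr{C}$ to Han's conjecture for the diagonal group algebras $R_i = \bfk\mathrm{Aut}_{\mathscr{C}}(x_i)$ by iterating Corollary~\ref{han-tri}, and then to dispatch the group algebra case directly.

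First, by the isomorphism $\bfk\mathscr{C} \simeq \Gamma_{\mathscr{C}}$ in \eqref{cat-tri}, the category algebra is an $n \times n$ lower triangular matrix algebra. I would regroup it as a $2 \times 2$ block triangular algebra
$$\Gamma_{\mathscr{C}} \;\simeq\; \begin{pmatrix} R_1 & 0 \\ L & \Gamma' \end{pmatrix},$$
where $\Gamma'$ is the lower-right $(n{-}1) \times (n{-}1)$ triangular matrix algebra corresponding to the full subcategory on $\{x_2, \dots, x_n\}$ (still of the shape \eqref{cat-tri}) and $L$ is the $\Gamma'$-$R_1$-bimodule assembled from the first column below $R_1$. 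Corollary~\ref{han-tri} then gives that Han's conjecture holds for $\Gamma_{\mathscr{C}}$ if and only if it holds for the diagonal subring $R_1 \times \Gamma'$. Since Hochschild homology and global dimension are both additive on products, Han's conjecture for a product is equivalent to Han's conjecture for each factor. Iterating on $\Gamma'$, induction on $n$ reduces the problem to Han's conjecture for each group algebra $R_1, \dots, R_n$ separately.

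Second, I would verify Han's conjecture for a finite group algebra $\bfk G$. If $\mathrm{char}\,\bfk$ does not divide $|G|$, Maschke's theorem gives $\mathrm{gldim}(\bfk G) = 0$ and there is nothing to check. Otherwise, using the standard conjugacy class decomposition
$$\HH_n(\bfk G) \;\cong\; \bigoplus_{[g]} \hsr_n\bigl(C_G(g);\, \bfk\bigr),$$
isolating the summand at $g=1$ gives ordinary group homology $\hsr_n(G;\bfk)$ as a direct summand; since the $p$-rank of $G$ is positive (with $p = \mathrm{char}\,\bfk$), $\hsr_n(G;\bfk)$ is nonzero for infinitely many $n$. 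Hence $\HH_n(\bfk G)$ cannot vanish for all large $n$, so Han's conjecture for $\bfk G$ holds vacuously. Alternatively, Han's conjecture for finite group algebras is already in the literature and could simply be cited.

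The only nontrivial bookkeeping is to ensure that after stripping off $R_1$ via Corollary~\ref{han-tri}, the remaining lower-right block $\Gamma'$ is again a triangular matrix algebra of exactly the form \eqref{cat-tri} (attached to the EI-subcategory on $\{x_2,\dots,x_n\}$), so that the same reduction applies at the next step; this is immediate from the explicit block description, and is the principal formal point to confirm. No serious obstacle is expected: the recollement machinery of Section~\ref{sec-han-conj} together with the well-understood cohomology of finite groups handles everything.
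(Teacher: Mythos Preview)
Your proposal is correct and follows essentially the same route as the paper: induction on $n$ via Corollary~\ref{han-tri} to peel off the diagonal blocks and reduce $\bfk\mathscr{C}$ to the product $R_1 \times \cdots \times R_n$ of group algebras, then invoking Han's conjecture for finite group algebras. The only minor differences are that the paper first reduces to skeletal $\mathscr{C}$ via Morita invariance of Han's conjecture (needed since \eqref{cat-tri} is stated under that hypothesis), and that the paper simply cites \cite{Cru23} for the group-algebra case whereas you supply the standard direct argument via the conjugacy-class decomposition of $\HH_*(\bfk G)$.
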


\begin{proof}
Let $\mathscr{C}$ be a finite EI category. Recall from \cite[Theorem 2.2]{Kel96} that Han's conjecture is an invariant under Morita equivalence, so it suffice to assume that $\mathscr{C}$ is skeletal. Assume that $\mathscr{C}$ has $n$ objects. Let $\Gamma_{\mathscr{C}}$ be the corresponding triangular matrix algebra of $\mathscr{C}$. Then by induction on $n$ and by Corollary \ref{han-tri}, Han's conjecture holds for the category algebra $\bfk\mathscr{C}$ if and only if it holds for $R_1 \times R_2 \times \cdots \times R_n$. Recall from \cite[Section 4.1]{Cru23} that Han's conjecture holds for each group algebra of a finite group, which implies that Han's conjecture holds for each $R_i$. Therefore, Han's conjecture holds for $\bfk\mathscr{C}$.
\end{proof}

\subsection{GLS algebras}\

In this subsection, we recall the GLS algebras introduced by Geiss-Leclerc-shr\"{o}er in \cite{GLS17}, which are associated to Cartan triples and show that Han's conjecture holds for GLS algebras.

Let $(C,D,\Omega)$ be a Cartan triple. Here, $C=(c_{ij})\in M_n(\mathbb{Z})$ is a symmetrizable generalized Cartan matrix which provides that the following conditions hold:
\begin{enumerate}
	\item[(C1)] $c_{ii}=2$ for all $i$;
	\item[(C2)] $c_{ij}\leq 0$ for all $i\neq j$, and $c_{ij}=0$ if and only if $c_{ji}=0$;
	\item[(C3)] There is a diagonal  matrix $D={\rm diag}(d_1,\cdots,d_n)\in M_n(\mathbb{Z})$ with $d_i\geq 1$ for all $i$ such that the product matrix $DC$ is symmetric.
\end{enumerate}
The matrix $D$ appearing in (C3) is called a \emph{symmetrizer} of $C$. And $\Omega$ is an orientation of $C$ which is a subset of $ \{1,2,\cdots,n\}\times \{1,2,\cdots,n\}$ such that the following conditions hold:
\begin{enumerate}
	\item[$(1)$] $\{(i,j),(j,i)\}\cap \Omega\neq \emptyset$ if and only if $c_{ij}<0$;
	\item[$(2)$] For each sequence ($i_1,i_2,\cdots,i_t,i_{t+1}$) with $t\geq 1$ and $(i_s,i_{s+1})\in \Omega$ for all $1\leq s\leq t$,  we have $i_1\neq i_{t+1}$.
\end{enumerate}

For a Cartan triple $(C,D,\Omega)$, define a quiver $Q=Q(C,\Omega)$ as follows: the set of vertices $Q_0:=\{1,2, \cdots,n\}$ and the set of arrows
\[Q_1:=\{\alpha^{(g)}_{ij}:j\rightarrow i\mid(i,j)\in \Omega, 1\leq g\leq {\rm gcd}(c_{ij},c_{ji})\}\cup\{\varepsilon_i:i\rightarrow i\mid 1\leq i\leq n\}.\]
Here, ${\rm gcd}(c_{ij},c_{ji})$ means the greatest common divisor of $c_{ij}$ and $c_{ji}$, which is always assumed to be positive.
We call $Q$ a quiver of type $C$.

\begin{defn}{\rm (\cite[Section 1.4]{GLS17})}\label{defn:GLS}
	Let $\bfk$ be a field, and $(C, D, \Omega)$ be a Cartan triple.  Then we have the quiver $Q=Q(C,\Omega)$. Let
	\[H=H(C,D,\Omega)=kQ/I,\]
	where $\bfk Q$ is the path algebra of $Q$, and $I$ is the two-sided ideal of $\bfk Q$ defined by the following relations:
	\begin{enumerate}
		\item[(H1)] For each vertex $i$, we have the \emph{nilpotency relation}
		\[\varepsilon_i^{d_i}=0.\]
		\item[(H2)] For each $(i,j)\in \Omega$ and each $1\leq g\leq {\rm gcd}(c_{ij},c_{ji})$, we have the \emph{commutativity relation}
		\[\varepsilon_i^{\frac{d_i}{{\rm gcd}(d_i,d_j)}} \alpha^{(g)}_{ij}=\alpha^{(g)}_{ij}\varepsilon_j^{\frac{d_j}{{\rm gcd}(d_i,d_j)}}.\]
	\end{enumerate}
\end{defn}
This $H=H(C, D, \Omega)$ is called the \emph{GLS algebra} associated to a Cartan triple $(C, D, \Omega)$.

Fix the Cartan triple $(C, D, \Omega)$  as above and $Q=Q(C,\Omega)$ is the quiver of type $C$. For each $1\leq i,j\leq n$, set $$H_{ij}:={\rm Span}_\bfk\{p\mid p \text{ is a path in } H \text{ from } j \text{ to }i\}.$$
We observe that $H_i:=H_{ii}\simeq \bfk [\varepsilon_i]/(\varepsilon_i^{d_i})$ and $H_{ij}$ is naturally an $H_i$-$H_j$-bimodule, and we have a
morphism of $H_i$-$H_j$-bimodules $\phi_{ilj}: H_{il}\otimes_{H_l}
H_{lj} \rightarrow H_{ij}$ which is induced by the concatenation of
paths in $H$.

For viewing the GLS algebra as a triangular matrix algebra, we renumbering the vertices set $Q_0=\{1,2,\ldots,n\}:=\{x_1,x_2,\ldots,x_n\}$ such that $(x_i,x_j)\notin \Omega$ for $i<j$. That is, there is no arrow from $x_j$ to $x_i$ in $Q$ for $i<j$. Indeed, there is no path from $x_j$ to $x_i$ in $Q$ for $i<j$. Denote by $A_i:=H_{x_i}$ and $A_{ij}:=H_{x_ix_j}$. We observe that the GLS algebra is isomorphic to the corresponding triangular matrix algebra
\begin{align}\label{cat-tri2}
H\simeq\Gamma_{H}=\left(
\begin{array}{cccc}
A_1 & &  &  \\
A_{21}& A_2 & & \\
\vdots	& 	\vdots & \ddots &  \\
A_{n1}&  A_{n2}& \cdots & A_n \\
\end{array}
\right)
\end{align}
whose multiplication is induced by those morphisms $\phi_{ilj}$.

\begin{thm}
	Han's conjecture holds for the GLS algebras associated to Cartan triples.
\end{thm}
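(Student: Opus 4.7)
The plan is to exploit the triangular matrix presentation \eqref{cat-tri2} of the GLS algebra $H$ together with the already-established reduction for triangular matrix algebras, Corollary~\ref{han-tri}. Because of the acyclicity of the orientation $\Omega$, the vertices have been ordered so that $\Gamma_H$ is genuinely upper triangular with diagonal blocks $A_i \simeq \bfk[\varepsilon_i]/(\varepsilon_i^{d_i})$, and the isomorphism $H \simeq \Gamma_H$ is already in hand.

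First, I would view $\Gamma_H$ as a $2 \times 2$ block upper triangular matrix algebra with top-left corner $A_1$ and bottom-right corner the $(n-1) \times (n-1)$ triangular subalgebra corresponding to the indices $2,\dots,n$. Applying Corollary~\ref{han-tri}, Han's conjecture for $H$ is equivalent to Han's conjecture for $A_1$ together with the smaller triangular algebra. A straightforward induction on $n$ then reduces Han's conjecture for $H$ to Han's conjecture for the product algebra $A_1 \times A_2 \times \cdots \times A_n$.

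Next, since both Hochschild homology and global dimension split over finite products of rings, namely $\HH_*(A \times B) \simeq \HH_*(A) \oplus \HH_*(B)$ and $\gldim(A \times B) = \max\{\gldim A, \gldim B\}$, Han's conjecture holds for the product $A_1 \times \cdots \times A_n$ if and only if it holds for each individual factor $A_i = \bfk[\varepsilon_i]/(\varepsilon_i^{d_i})$. Each such $A_i$ is either the ground field $\bfk$ (when $d_i = 1$) or a truncated polynomial algebra in a single variable, and in either case is a monomial algebra. By Y.~Han's original theorem \cite[Theorem 3]{Han06}, Han's conjecture holds for all finite-dimensional monomial algebras, so it holds for each $A_i$, completing the argument.

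No serious obstacle is anticipated here: the triangular presentation of $H$ has already been established in the excerpt, Corollary~\ref{han-tri} does all the homological heavy lifting, the product case is purely formal, and the diagonal factors fall under Han's classical monomial-algebra result. The proof is essentially an application of the reduction machinery developed in Section~\ref{sec-han-conj}, mirroring the argument just given for category algebras of finite EI categories.
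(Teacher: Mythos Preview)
Your proposal is correct and follows essentially the same route as the paper's own proof: both use the triangular matrix presentation \eqref{cat-tri2}, apply Corollary~\ref{han-tri} inductively on $n$ to reduce to the diagonal factors $A_i\simeq \bfk[\varepsilon_i]/(\varepsilon_i^{d_i})$, and then invoke Han's result \cite[Theorem~3]{Han06} for monomial (truncated) algebras. The only cosmetic slip is that $\Gamma_H$ as displayed is lower, not upper, triangular, which of course makes no difference to the argument.
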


\begin{proof}
	Let $H=H(C, D, \Omega)$ be the GLS algebra associated to a Cartan triple $(C, D, \Omega)$ and $\Gamma_H$ be the corresponding triangular matrix algebra of $H$. Recall from \cite[Theorem 3]{Han06} that Han's conjecture  holds for each truncated monomial algebra, which implies that Han's conjecture holds for each $A_i$ in $\Gamma_H$. Therefore, Han's conjecture holds for $H$ by induction on $n$ and by Corollary \ref{han-tri}.
\end{proof}

\bigskip

\noindent
{\bf Acknowledgments.} The second and the fourth authors were supported by the National Natural Science Foundation of China (No. 12071137), by Key Laboratory of Mathematics and Engineering Applications of Ministry of Education, by  Shanghai Key Laboratory of PMMP (No. 22DZ2229014), and by Fundamental Research Funds for the Central Universities. The third author was supported by the National Natural Science Foundation of China (No. 12401038).

We are very grateful to Hong-Xing Chen, Kostia Iusenko, Bernhard Keller, John W.MacQuarrie, Greg Stevenson for useful remarks and comments.

\noindent
{\bf Declaration of interests. } The authors have no conflicts of interest to disclose.

\noindent
{\bf Data availability. } No new data were created or analyzed in this study.

\vspc

\end{document}